\RequirePackage{fix-cm}
\documentclass{article}

\usepackage{arxiv}
\usepackage{lipsum}
\usepackage{placeins}
\usepackage[italicComments=true,indLines=true,noEnd=false]{algpseudocodex} 
\usepackage{algorithm} 
\usepackage{float}
\usepackage{amsthm}
\usepackage[english]{babel}
\usepackage[T1]{fontenc}
\usepackage[utf8]{inputenc} 
\usepackage{hyperref}
\usepackage{url}
\usepackage{booktabs}
\usepackage{amssymb,amsfonts,amsmath,amsthm,amstext,amsbsy}
\usepackage{nicefrac}
\usepackage[version=4]{mhchem}
\usepackage{microtype}
\usepackage{mathrsfs}
\usepackage{graphicx}
\usepackage[numbers,sort&compress]{natbib}
\usepackage{doi}
\usepackage{xcolor}
\usepackage{enumitem}
\usepackage{caption}
\usepackage[title]{appendix}
\usepackage[capitalise,nameinlink]{cleveref}
\usepackage[bottom]{footmisc}
\usepackage{multirow}
\usepackage{subcaption} 
\usepackage{pgfplots}
\usepackage{tikz}
\usetikzlibrary{positioning, arrows.meta}
\usepackage{array} 

\newtheorem{proposition}{Proposition}[section]

\newtheorem{remark}{Remark}

\usepackage{tabularx}      
\usepackage[table]{xcolor} 

\newcolumntype{C}{>{\centering\arraybackslash}X}
\definecolor{ao(english)}{rgb}{0.0, 0.5, 0.0}

\newcounter{savesecnumdepth}

\pgfplotsset{compat=1.17}

\title{Autoencoders versus Numerical Analysis--Informed Manifold Learning
for Navier--Stokes Flows}

\author{
\textbf{Alessandro Della Pia\textcolor{blue}{$^{1}$}\thanks{Work performed while at Scuola Superiore Meridionale, School for Advanced Studies, Naples 80138, Italy}, 
Lucia Russo\textcolor{blue}{$^{2}$}, 
Ioannis Kevrekidis\textcolor{blue}{$^{3}$}, 
Constantinos Siettos\textcolor{blue}{$^{4}$}\thanks{Corresponding author, email: \texttt{constantinos.siettos@unina.it}}} \\
\\
\textcolor{blue}{$^{(1)}$}
SISSA, International School for Advanced Studies,
\emph{Mathematics Area, mathLab}, Trieste, 34136, Italy
\\
\textcolor{blue}{$^{(2)}$} Department of Chemical and Biomolecular Engineering, Department of Applied Mathematics and \\
\hspace{0.39cm} Statistics \& Department of Urology, \emph{Johns Hopkins University}, Baltimore, MD, 21218, USA
\\
\textcolor{blue}{$^{(3)}$} Institute of Science and Technology for Energy and Sustainable Mobility (STEMS), \\
\emph{National Research Council (CNR)}, Naples, 80125, Italy
\\
\textcolor{blue}{$^{(4)}$} Department of Mathematics and Applications ‘‘Renato Caccioppoli", \\ \emph{University of Naples ‘‘Federico II"}, Naples, 80126, Italy
}

\date{\today}

\begin{document}

\maketitle

\begin{abstract}
Autoencoders (AEs) have become a dominant approach to nonlinear latent-space construction in data-driven reduced-order modelling (ROM), with their decoders lifting latent representations back to the ambient state space. Their prominence, however, has overshadowed an established alternative: manifold-learning methods grounded in classical numerical analysis. We revisit this alternative using Parsimonious Diffusion Maps (PDMs), benchmarking them against Proper Orthogonal Decomposition (POD)-based ROMs and several convolutional AE architectures for the two-dimensional incompressible flow past a rotating cylinder ---a bifurcating Navier-Stokes (NS) system organized by a codimension-2 Bogdanov-Takens point and its associated Hopf, saddle-node, and homoclinic bifurcations. Our approach uses PDMs to identify a parsimonious and interpretable set of intrinsic latent coordinates and to estimate their dimension directly from data. Gaussian process regression then learns the latent dynamics, while convex $K$-nearest-neighbor ($K$-NN) interpolation in PDMs space constructs the pre-image map, for which we establish pointwise consistency. 
The resulting nonlinear ROM substantially outperforms POD-based ROMs and achieves reconstruction and prediction accuracy comparable to ---and, in some bifurcating regimes, better than--- that of AE-based ROMs. At the same time, latent-variable learning with PDMs requires orders of magnitude less computational time than AE training.
\end{abstract}

\keywords{Autoencoders \and Fluid Dynamics \and Navier--Stokes PDEs \and Numerical Analysis \and Manifold Learning \and Reduced-Order Models \and Diffusion Maps \and Latent Spaces}

\section{Introduction}
\label{sec:intro}

Reduced-order modelling has become a standard strategy in computational fluid dynamics (CFD): the resulting ROMs capture the dominant features of complex flows at a small fraction of the cost and memory of full-order simulation~\cite{haller2025modeling,temam1995navier,titi1990approximate,quarteroni2007numerical,quarteroni2014reduced,vinuesa_brunton_ML-CFD,hijazi2020data,Pichi}. By replacing high-fidelity simulations with low-dimensional surrogates, ROMs enable rapid predictions and facilitate tasks such as parametric studies, bifurcation and stability analysis, flow control, and design optimization, which would otherwise be prohibitively expensive in many cases of practical interest.

The foundational idea behind ROMs is that, despite the high dimensionality and nonlinearity of fluid flows, their dynamics often evolve on low-dimensional latent structures that can be parameterized by a suitable set of coordinates. A classical such example is the derivation of the celebrated Lorenz system by paper and pencil as a low-dimensional approximation of the Navier--Stokes--Boussinesq equations describing Rayleigh--B\'enard convection~\cite{lorenz1963deterministic}. In general, the construction of ROMs can be interpreted as a three-stage process consisting of: (i) the encoding stage, i.e., the discovery of a suitable low-dimensional (latent) coordinate system that encodes the essential/emergent flow dynamics, (ii) the modelling of the temporal evolution of these latent low-dimensional coordinates via reduced-order surrogate models, and (iii) the reconstruction (lifting/decoding) of the high-dimensional flow field from the reduced representation. The effectiveness of this ``embed--learn--lift'' framework, rooted in the celebrated Equation-Free (EF) multiscale methodology~\cite{kevrekidis2003equation,RUSSO200751} introduced in the early 2000s, critically depends on the accurate identification of the latent space.

Traditional approaches to ROM construction in fluid dynamics are typically intrusive, as they rely on explicit knowledge of the governing equations. In such methods, the Navier--Stokes equations are projected onto a reduced basis, most commonly obtained through Proper Orthogonal Decomposition (POD)~\cite{deane1991low,ma2002low,lassila2014model,ballarin2016pod,stabile2018finite,rowley2005model,brunton2020machine}. This procedure leads to reduced dynamical systems that approximate the original flow behavior. Related developments arise in the context of dynamical systems theory, where simplified models are constructed near bifurcation points by systematically reducing the governing equations to their essential components. Examples include normal form theory~\cite{seydel2009practical}, invariant and center manifold approaches~\cite{temam1995navier,hirsch1970invariant,gallay2002invariant,siettos2014equation,wiggins2013normally,siettos2022numerical}, spectral submanifold theory~\cite{haller2016nonlinear,breunung2018explicit,buza2024spectral}, approximate inertial manifolds~\cite{temam1995navier,titi1990approximate,Colombo2025ROMHopf}, and quantized local reduced-order models in time~\cite{COLANERA2025118393}. These approaches provide valuable insight into the onset of instabilities and the structure of solution branches in bifurcation scenarios, while significantly reducing computational complexity. However, their reliance on the governing equations limits their applicability in fully data-driven settings. On the other hand, non-intrusive ROMs operate solely on data and do not require explicit knowledge of the underlying equations. The exponential growth of machine learning (ML) and data-driven techniques has greatly expanded the capabilities of such approaches~\cite{deane1991low,hijazi2020data,stabile2018finite,Hesthaven_2018,girfoglio2022pod}. In this framework, the identification of reduced coordinates is commonly formulated as a manifold learning problem, where the goal is to uncover a low-dimensional structure embedded in high-dimensional flow data. Classical linear techniques such as POD remain widely used because of their simplicity and the availability of explicit reconstruction mappings. However, their linear nature limits their ability to represent parsimoniously strongly nonlinear flow dynamics including multistability and turbulence \cite{DellaPia_diffusion_2024}.

To address this limitation, nonlinear manifold learning methods (see also in \cite{gallos2021construction,papaioannou2022time} for a review) such as Kernel PCA \cite{scholkopf1997kernel}, Diffusion Maps (DMs)~\cite{coifman2005geometric,coifman2006geometric,nadler2006diffusion,dsilva2018parsimonious,galaris2022numerical,Kevrekidis_DM,Patsatzis_2023,DellaPia_diffusion_2024}, ISOMAP~\cite{tenenbaum2000global}, Local Linear Embedding~\cite{roweis2000nonlinear}, and Laplacian Eigenmaps~\cite{belkin2003laplacian,belkin2008towards} have been introduced to better capture the intrinsic geometry of complex datasets. These methods can identify nonlinear low-dimensional embeddings that more faithfully represent the underlying dynamics, including in regimes characterized by complex or turbulent/chaotic behavior~\cite{DellaPia_diffusion_2024}. A major challenge in this context is the so-called pre-image problem, i.e., reconstructing the high-dimensional state from the latent representation. Unlike linear methods, where reconstruction reduces to a straightforward least-squares problem with closed-form solutions, nonlinear embeddings require approximate reconstruction techniques such as geometric harmonics, kernel methods, local interpolation schemes including $K$-nearest-neighbors~\cite{chiavazzo2014reduced,papaioannou2022time,chin2024enabling,Patsatzis_2023}, or the recently introduced random-feature/multi-scale neural decoders ensuring mass preservation (RANDSMAPs \cite{patsatzis2026}).

More recently, alongside these developments, autoencoders have attracted significant attention and undergone rapid development as nonlinear dimensionality-reduction tools ~\cite{chen2018molecular,gonzalez2018deep,fukami2020convolutional,li2020scalable,vlachas2022multiscale,floryan2022data,oommen2022learning,Eivazi,koronaki2024nonlinear,kontolati2024learning,zeng2024autoencoders,simpson2024vprom}. Originally developed in the context of autoassociative neural networks and nonlinear principal component analysis \cite{kramer1991nonlinear}, they provide, usually via deep neural networks, a nonlinear parametrization of high-dimensional data through an encoder that maps snapshots to latent variables and a decoder that reconstructs the corresponding ambient-space states. Autoencoder-based methods have been successfully applied to a wide range of fluid-flow problems, often in conjunction with techniques such as sparse identification of nonlinear dynamical systems (SINDy)~\cite{Brunton_SINDy,Hasegawa_2020,Loiseau_2020,Callaham_2022,Champion_SINDy,CONTI2023116072}, neural operators~\cite{kontolati2024learning}, Gaussian Process Regression~\cite{wan2017reduced,stephenson2018accelerating,ma2021data,ortali2022gaussian}, and neural-network-based sequence modelling~\cite{bertalan2019learning,lee2020coarse,arbabi2020linking,lee2023learning,dietrich2023learning,fabiani2024task,Srinivasan_2019}. 
Despite their flexibility and strong empirical performance, autoencoder-based methods present several important challenges. Their construction requires the selection of many design choices and hyperparameters, including the encoder--decoder architecture, the number and type of hidden layers, the number of neurons or filters, activation functions, regularization strategies, optimization parameters, and the dimension of the latent space. These choices are often made through computationally intensive trial-and-error procedures and may strongly affect the resulting ROMs. Moreover, training autoencoders amounts to solving a highly nonconvex optimization problem. From a theoretical standpoint, the global training of neural networks is known to be computationally intractable in worst-case settings, with classical results showing NP-hardness~\cite{froese2023training} even for very small architectures. Thus, although practical algorithms such as stochastic gradient descent can be highly effective, they do not generally provide guarantees of reaching a globally optimal latent representation. Although this does not imply that every practical autoencoder training instance is intractable, it highlights that autoencoder training lacks general global optimality guarantees and may require substantial empirical tuning.

In this work we revisit the increasingly common reliance on autoencoder-based latent spaces in data-driven ROMs, comparing them against manifold-learning methods grounded in classical numerical analysis, which identify intrinsic coordinates directly from data and yield an interpretable latent representation. We approach this comparison through a data-driven ROM framework built on Parsimonious Diffusion Maps (PDMs), a nonlinear manifold-learning technique designed to extract low-dimensional, latent coordinates. Diffusion Maps and their parsimonious variant were recently introduced to the fluid dynamics community for the development of physics-infused machine-learning models of one-dimensional thin-film flows~\cite{Kevrekidis_DM} and for two-dimensional flows with turbulence~\cite{DellaPia_diffusion_2024}. They were subsequently used to construct ``normal-form'' ROMs, namely reduced models of minimal dimensionality, for bifurcation analysis in latent space in flow configurations exhibiting codimension-1 bifurcations of stationary states, such as pitchfork and Hopf bifurcations, as well as limit-cycle bifurcations~\cite{dellapia2026surrogate}.
Here, we provide, to the best of our knowledge, the first extensive comparison between autoencoder-based ROMs and nonlinear manifold learning techniques, and in particular Parsimonious-Diffusion-Maps-based ROMs in a complex two-dimensional Navier--Stokes flow; a comparison with the classical (linear manifold learning) POD is also performed. The selected benchmark flow configuration features rich emergent dynamics, including multiple flow regimes and bifurcation-induced transitions, and therefore provides a stringent test case for assessing the accuracy, interpretability, computational cost, and latent-space structure of the three approaches. We note that autoencoder-based architectures have recently been compared with POD in the context of intraventricular flows~\cite{lazpita2025critical}. While this comparison provides useful insights, it should be interpreted in light of the different nature of the two approaches: POD seeks an optimal linear subspace representation, whereas autoencoders aim to learn a nonlinear parametrization of the data manifold. Therefore, assessing the role of autoencoders in nonlinear ROMs calls for comparisons not only with linear projection methods, but also with nonlinear manifold-learning techniques such as Diffusion Maps, whose latent coordinates admit an interpretation grounded in numerical analysis.
For both PDM- and autoencoder-based ROMs, the latent dynamics are learned through Gaussian Process Regression (GPR), yielding accurate non-intrusive surrogate models with uncertainty quantification. For the PDM framework, reconstruction of the full-order state is achieved by using $K$-nearest-neighbors ($K$-NN) interpolation~\cite{chin2024enabling} to approximate the pre-image, or lifting, map from diffusion coordinates to the high-dimensional flow fields. The resulting ROM is then systematically compared with classical linear POD embeddings and nonlinear autoencoder-based ROMs, demonstrating that PDMs offer a balanced and efficient alternative for data-driven reduced-order modelling of complex fluid flows. In particular, the performance of the three approaches is assessed via the benchmark problem of two-dimensional flow past a rotating cylinder, governed by the Reynolds number $Re$ and the rotation rate $\alpha$. In this setting and in the parameter space considered, the NS equations exhibit a rich sequence of dynamical transitions associated with a codimension-2 Bogdanov--Takens bifurcation, involving the interaction of Andronov--Hopf, saddle-node, and homoclinic bifurcation curves~\cite{Sierra}. Such multi-regime behavior provides a stringent benchmark for reduced-order modelling approaches.

The remainder of the paper is organized as follows. Section~\ref{sec:methodology} presents the data-driven ROM formulation within the three-stage embed--learn--lift framework (Sections~\ref{subsec:manifold}-~\ref{subsec:decoder}), with additional details on GPR reported in Appendix~\ref{app:GPR}. The methodology then focuses on Parsimonious Diffusion Maps in Sections~\ref{subsec:meth_PDMs} and \ref{sec:kNN}, while autoencoders and POD, which are used for systematic comparison, are briefly reviewed in Sections~\ref{subsec:AE} and~\ref{subsec:POD}, respectively. The benchmark flow configuration is described in Section~\ref{sec:flow_config}, while details of the data generation procedure are reported in Appendix~\ref{app:NS_eq}. The results are presented and discussed in Section~\ref{sec:results} and Appendix~\ref{app:UQ_AE}, and concluding remarks are finally drawn in Section~\ref{sec:conclusions}.


\section{Methodology}
\label{sec:methodology}
Data-driven reduced-order modelling (ROM) of fluid flows (and, more generally, of complex dynamical systems governed by partial differential equations) can be formulated within a three-stage framework, which is schematized in Fig.~\ref{graphics_abs}. Given a high-dimensional state, the objective is to construct a low-dimensional surrogate model that accurately captures the underlying dynamics while enabling efficient analysis and prediction.
\subsection{Nonlinear Manifold Learning}
\label{subsec:manifold}
The first stage consists of manifold learning, namely identifying a mapping from the original high-dimensional state space to a low-dimensional latent space. Given a point cloud 
\begin{equation}
\mathcal{X}_M = \{\mathbf{x}_m\}_{m=1}^{M}\subset\mathbb{R}^{N}, 
\end{equation}
whose elements are assumed to lie on, or near, a smooth low-dimensional manifold $\mathcal{M}\subset\mathbb{R}^{N}$, manifold learning aims to construct finite-dimensional operators, such as graph Laplacians or kernel matrices, whose spectral decomposition reveals intrinsic coordinates of the data. Depending on the operator, the resulting data-driven coordinate map 
\begin{equation}
\Phi_M : \mathcal{X}_M \to \mathbb{R}^{d}, \qquad \mathbf{y}_m = \Phi_M(\mathbf{x}_m), 
\label{eq:HD2LDmap}
\end{equation}
approximately preserves certain geometric quantities, such as geodesic distances in ISOMAP, diffusion distances in Diffusion Maps, or local neighborhood relationships in Laplacian Eigenmaps. The retained dimension \(d\) is usually selected according to spectral decay, spectral gaps, or parsimonious-coordinate criteria, and need not coincide with the intrinsic dimension of the underlying manifold. When the point cloud provides a sufficiently dense discretization of a compact region \(\mathcal{W}\subset\mathcal{M}\), and when \(\mathcal{M}\) is endowed with the Riemannian metric induced by the ambient Euclidean space, graph-based operators can be related, under suitable assumptions, to continuum operators on \(\mathcal{M}\). These assumptions typically concern the sampling distribution, kernel bandwidth, normalization of the empirical operator, and spectral separation of the retained modes. For graph Laplacians associated with Laplacian Eigenmaps or Diffusion Maps, the limiting operator is typically a Laplace--Beltrami-type operator, possibly modified by the sampling density \cite{belkin2003laplacian,belkin2006convergence,belkin2008towards,coifman2005geometric,nadler2006diffusion}. In this sense, the empirical eigenvectors provide discrete approximations of continuum eigenfunctions, and the data-driven coordinate map $\Phi_M$ can be seen as a discrete approximation of the continuum map
\begin{equation}
\Phi : \mathcal{W} \to \mathbb{R}^{d}. 
\end{equation}
\paragraph{Out-of-sample extension.} Given a new point $\mathbf{x}^{*}\notin \mathcal{X}_M$, but assumed to lie on or near $\mathcal{M}$, the out-of-sample extension problem consists of computing its latent representation $\Phi_M(\mathbf{x}^{*})$. A standard approach is the Nystr\"om extension, which evaluates the previously computed eigenfunctions at the new point. In component form, and up to the normalization used in the particular kernel or graph operator, this can be written schematically as \cite{coifman2006geometric,papaioannou2022time} 
\begin{equation}
\phi_{\ell,M}(\mathbf{x}^{*}) = \frac{1}{\lambda_\ell} \sum_{m=1}^{M} k_M(\mathbf{x}^{*},\mathbf{x}_m)\,u_{\ell,m}, \qquad \ell=1,\ldots,d, 
\end{equation}
where $(\lambda_\ell,\mathbf{u}_\ell)$ denotes the $\ell$-th eigenpair of the normalized kernel or graph operator, $u_{\ell,m}$ is the $m$-th component of the corresponding eigenvector, and $k_M$ is the associated normalized kernel. The out-of-sample coordinate map reads:
\begin{equation}
\Phi_M(\mathbf{x}^{*}) = \big( \phi_{1,M}(\mathbf{x}^{*}),\ldots,\phi_{d,M}(\mathbf{x}^{*}) \big). 
\end{equation}
With appropriate normalization and sampling assumptions, this provides a consistent approximation of the continuum coordinate map $\Phi(\mathbf{x}^{*})$.

\subsection{Learning Surrogate Models on Latent Spaces}
\label{subsec:learning}
The second stage consists of learning the governing dynamics in the latent space. This is achieved through a regression model of the form
\begin{equation}
\mathbf{y}(t+T_p) = \boldsymbol{\phi}\bigl(\mathbf{r}(t), \boldsymbol{\chi}\bigr) + \mathbf{e}(t),
\label{eq:regression_basic_map}
\end{equation}
where $T_p$ is the prediction time horizon, $\mathbf{r}(t)$ is a regressor constructed from the latent variables, $\boldsymbol{\chi}$ collects governing parameters or external inputs to the model, and $\mathbf{e}(t)$ represents modelling errors or noise. In contrast to continuous-time formulations, the discrete-time representation avoids the need to estimate time derivatives from data, which can be particularly challenging in the presence of noise or sparsely sampled measurements, as is often the case in experimental datasets.

In the present work, the latent dynamics are learned using Gaussian Process Regression (GPR), which additionally provides a probabilistic estimate of the model uncertainty (see Appendix~\ref{app:GPR}). Note that alternative approaches could also be employed at this stage, such as sparse identification of nonlinear dynamics (SINDy~\cite{Brunton_SINDy}) or artificial neural networks (ANNs), depending on the desired balance between interpretability, flexibility, and computational cost.

\begin{remark}
Here we note that we learn the discrete-time evolution map between successive snapshots of the data, rather than the continuous-time vector field or its time derivatives. Learning time derivatives from real data can be restrictive and, in many applications, unrealistic. It typically requires sufficiently dense temporal sampling, low-noise observations, and a meaningful numerical smoothing/differentiation procedure. These requirements may not be satisfied in experimental, observational, or coarse-sampled data sets. On the other hand, we note that bifurcations detected in the learned discrete map are bifurcations of the sampled-time dynamics and need not coincide, in a one-to-one manner, with the bifurcations of the underlying continuous-time system. For example, an Andronov--Hopf bifurcation of a continuous-time system corresponds, at the level of a discrete map, to a Neimark--Sacker bifurcation. Therefore, unless the learned map is known to approximate the exact flow map of an underlying differential equation, the bifurcation structure should be interpreted primarily as that of the discrete-time sampled system (see the discussion in \cite{rico2000noninvertibility} and more recently in \cite{cui2021pathologies}).
\end{remark}

\subsection{The solution of the pre-image problem}
\label{subsec:decoder}
The third stage consists of solving the so-called pre-image (or lifting) problem in manifold learning, i.e.\ reconstructing the high-dimensional state from a given latent representation. 

Given a latent coordinate $\mathbf{y}\in\mathbb{R}^{d}$, not equal to $\Phi_M(\mathbf{x}_m)$ for any training snapshot, the pre-image problem consists of finding a high-dimensional state $\mathbf{x}\in\mathcal{M}$, such that 
\begin{equation}
\Phi_M(\mathbf{x}) \approx \mathbf{y}. 
\end{equation}
Hence, in the pre-image problem, the latent coordinates are given and one seeks a corresponding high-dimensional realization. The problem is generally ill-posed, because the latent point $\mathbf{y}$ may not lie exactly in the range of the empirical coordinate map, and because the coordinate map is not guaranteed to be globally injective. A generic regularized formulation of the pre-image problem is 
\begin{equation}
\hat{\mathbf{x}} = \operatorname*{arg\,min}_{\mathbf{x}\in\mathbb{R}^{N}} \left\| \Phi_M(\mathbf{x})-\mathbf{y} \right\|_2^{2} + \lambda R(\mathbf{x}), 
\end{equation}
where $R(\mathbf{x})$ is a regularization term penalizing deviations from the data manifold or enforcing additional physical constraints. Note that evaluating $\Phi_M(\mathbf{x})$ for an arbitrary $\mathbf{x}\in\mathbb{R}^{N}$ already requires an out-of-sample extension, since $\mathbf{x}$ is not necessarily one of the training points. In practice, solving this optimization problem directly is often difficult. Therefore, approximate lifting maps 
\begin{equation}
\Gamma_M:\mathbb{R}^{d}\to\mathbb{R}^{N} 
\label{eq:psi}
\end{equation}
are commonly constructed using local interpolation, $K$-nearest-neighbors reconstructions, geometric harmonics, double Diffusion Maps \cite{evangelou2022double,Patsatzis_2023}, and kernel regression \cite{chiavazzo2014reduced}. Unlike out-of-sample extension, the pre-image problem requires additional assumptions or regularization and cannot be resolved by the forward coordinate map alone.

Here, we compare three different approaches: Parsimonious Diffusion Maps (PDMs), convolutional neural network-based autoencoders (AEs), and Proper Orthogonal Decomposition (POD). While these methods differ in how the mapping $\Phi_M$ (and, consequently, $\Gamma_M$) is constructed, the second-stage regression model is kept consistent across all approaches, enabling a systematic comparison of their ability to capture the intrinsic low-dimensional structure of the flow. As detailed in the remainder of this section, different lifting strategies are adopted depending on the method. For Parsimonious Diffusion Maps, the lifting is performed using a $K$-nearest-neighbors ($K$-NN) interpolation in the latent space. For autoencoders, the inverse mapping is directly learned by the decoder network, which provides a nonlinear, data-driven approximation of the lifting operator. For POD, the pre-image problem admits a closed-form solution through the linear reconstruction provided by the modal expansion.

In the following, we describe in detail the three approaches used to construct the mapping $\Phi_M$, along with the corresponding lifting strategies to solve the pre-image problem.

\subsection{The Encoder: Parsimonious Diffusion Maps}
\label{subsec:meth_PDMs}

\begin{figure}
    \centering
    \includegraphics[scale=0.5]{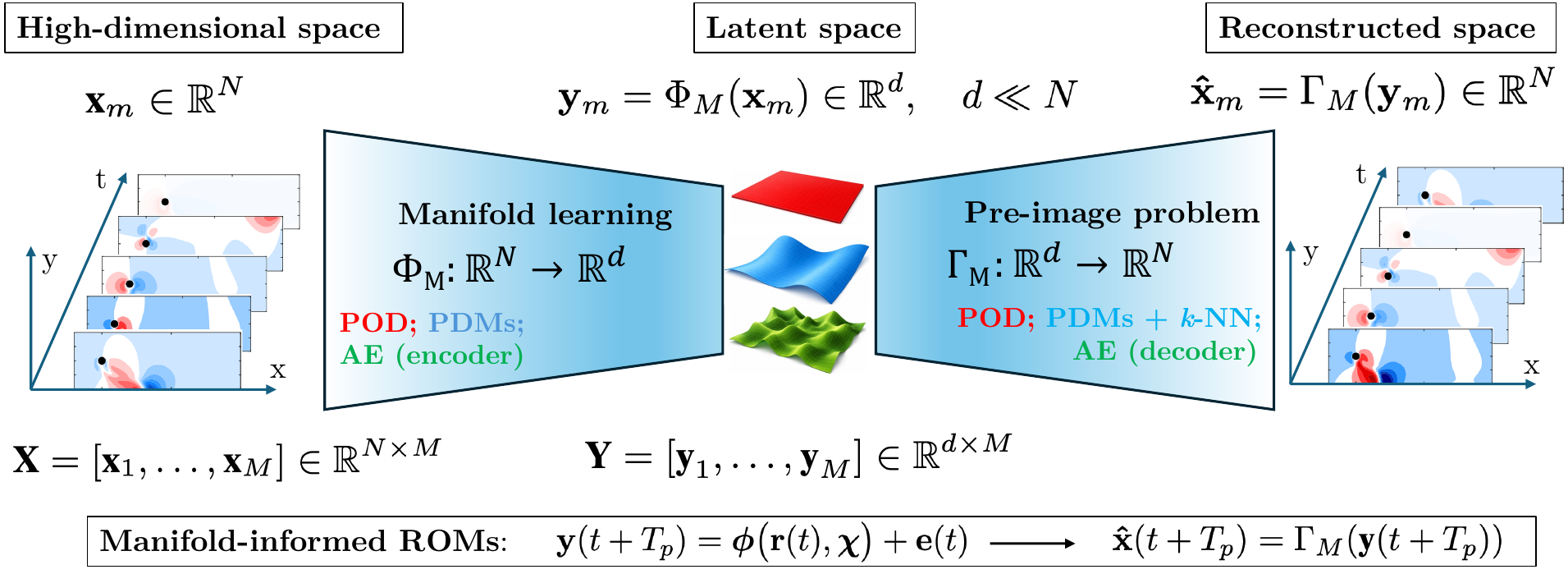}
\caption{Data-driven reduced-order modelling (ROM) in fluid dynamics as a three-stage framework:
(i) projection of high-dimensional Navier--Stokes flow fields onto a latent space via manifold learning (Eq.~\eqref{eq:HD2LDmap});
(ii) learning manifold-informed ROMs by approximating the solution operator in the latent space (Eq.~\eqref{eq:regression_basic_map});
(iii) solving the pre-image problem, i.e., constructing a mapping from the learned latent-space dynamics to the full spatio-temporal dynamics (Eq.~\eqref{eq:psi}), thereby recovering the solution operator of the Navier--Stokes equations.
\label{graphics_abs}}
\end{figure}

Given a high-dimensional state $\mathbf{x} \in \mathbb{R}^N$, Diffusion Maps aim to obtain a low-dimensional embedding $\mathbf{y} \in \mathbb{R}^d$, with $d \ll N$, such that Euclidean distances between points in the embedded space approximate diffusion distances between the original points \cite{nadler2006diffusion}. Following the theoretical formulation and numerical implementation presented in earlier works \cite{dsilva2018parsimonious,holiday2019manifold,Patsatzis_2023,gallos2024data,DellaPia_diffusion_2024}, the algorithm begins by defining a similarity measure between pairs of data points $\mathbf{x}_i, \mathbf{x}_j \in \mathbb{R}^N$, for all $i,j=1,\ldots,M$, where $M$ denotes the total number of snapshots. Using the Euclidean distance
\begin{equation}
d_{ij} = \|\mathbf{x}_i - \mathbf{x}_j\|_2,
\end{equation}
we construct a Gaussian kernel
\begin{equation}
k(\mathbf{x}_i,\mathbf{x}_j) = \exp\left(-\frac{\|\mathbf{x}_i-\mathbf{x}_j\|_2^2}{\epsilon^2}\right),
\end{equation}
which defines the affinity matrix
\begin{equation}
\mathbf{F} = [f_{ij}] = [k(\mathbf{x}_i,\mathbf{x}_j)].
\label{eq:affinity_matrix}
\end{equation}
Here, $\epsilon$ controls the local neighborhood size in the high-dimensional space. In our implementation, we set $\epsilon = \mathrm{median}(d_{ij})$, which promotes a relatively large neighborhood. Other strategies for selecting $\epsilon$ exist \cite{singer2009detecting,gallos2021construction}.

Next, the $M \times M$ Markov transition matrix $\mathbf{M}$ is formed by row-normalizing the affinity matrix:
\begin{equation}
\mathbf{M} = \mathbf{D}^{-1}\mathbf{F},
\qquad
\mathbf{D} = \operatorname{diag}\left(\sum_{j=1}^{M} f_{ij}\right).
\label{eq:Markovian_matrix}
\end{equation}
Each entry $\mu_{ij}$ of $\mathbf{M}$ represents the probability of moving from point $i$ to point $j$ in the high-dimensional space:
\begin{equation}
\mu_{ij} = \operatorname{Prob}\left(X_{t+1}=\mathbf{x}_j \mid X_t=\mathbf{x}_i\right).
\end{equation}
Equivalently, using the kernel notation,
\begin{equation}
\mu_{ij} = \frac{k(\mathbf{x}_i,\mathbf{x}_j)}{\operatorname{deg}(\mathbf{x}_i)},
\qquad
\operatorname{deg}(\mathbf{x}_i)=\sum_{j=1}^{M} k(\mathbf{x}_i,\mathbf{x}_j),
\end{equation}
which recovers Eq.~\eqref{eq:Markovian_matrix}.

The transition matrix $\mathbf{M}$ is similar to the symmetric positive-definite matrix
\begin{equation}
\hat{\mathbf{M}}=\mathbf{D}^{-1/2}\mathbf{F}\mathbf{D}^{-1/2},
\end{equation}
which admits a standard eigendecomposition. In particular, the eigenpairs of $\mathbf{M}$ are written as
\begin{equation}
\mathbf{M}\mathbf{u}_i=\lambda_i \mathbf{u}_i,
\qquad i=1,\ldots,M,
\label{eq:Markov}
\end{equation}
where $\lambda_i \in \mathbb{R}$ are the eigenvalues and $\mathbf{u}_i \in \mathbb{R}^M$ are the corresponding right eigenvectors. The leading non-trivial eigenvectors provide coordinates for the low-dimensional embedding.

The standard Diffusion Maps embedding maps each snapshot $\mathbf{x}_m$ to
\begin{equation}
\mathbf{y}_m = (\lambda_1 u_{1,m}, \ldots, \lambda_d u_{d,m})
\equiv (y_{1,m},\ldots,y_{d,m}),
\qquad m=1,\ldots,M,
\end{equation}
where $u_{i,m}$ denotes the $m$-th component of the $i$-th right eigenvector corresponding to the $i$-th largest non-trivial eigenvalue $\lambda_i$. This embedding approximates the diffusion distance in the high-dimensional space by the Euclidean distance in the embedded space:
\begin{equation}
D_t^2(\mathbf{x}_i,\mathbf{x}_j)
=
\left\|
\mu_t(\mathbf{x}_i,\cdot)-\mu_t(\mathbf{x}_j,\cdot)
\right\|_{L_2,1/\operatorname{deg}}^2
=
\sum_{k=1}^{M}
\frac{
\left(
\mu_t(\mathbf{x}_i,\mathbf{x}_k)-\mu_t(\mathbf{x}_j,\mathbf{x}_k)
\right)^2
}{
\operatorname{deg}(\mathbf{x}_k)
}.
\end{equation}
Here, $\mu_t(\mathbf{x}_i,\cdot)$ denotes the $i$-th row of $\mathbf{M}^t$. In our computations, we use $t=1$.

\begin{remark}
Here, for this encoding step, we note that asymptotically, as the number of
points sampled uniformly from a low-dimensional manifold tends to infinity,
the operator $\frac{1}{\sigma}(\boldsymbol{I}-\boldsymbol{M})$ --- with
$\sigma=\epsilon^{2}$ the kernel scale --- converges to the
Laplace--Beltrami operator of the underlying Riemannian
manifold~\cite{coifman2005geometric,nadler2006diffusion}. The eigenvectors
associated with the largest eigenvalues of $\boldsymbol{M}$ may therefore be
regarded as discrete approximations of the leading eigenfunctions of the
Laplace--Beltrami operator. Since these eigenfunctions provide a faithful
embedding of the manifold~\cite{jones2008manifold}, this justifies using the
eigenvectors of $\boldsymbol{M}$ for practical, data-driven embedding. This
guarantees that the leading Laplace--Beltrami eigenfunctions---approximated
in the discrete setting by the diffusion-maps eigenvectors of
$\mathbf{M}$-provide, locally, a
faithful (low-distortion, injective) coordinate system for the slow
manifold. It implies that states that are distinct on $\mathcal{M}$ are therefore mapped to distinct latent points, and nearby states remain nearby, which is the
precise sense in which the DM latent space is \emph{interpretable}: latent
coordinates carry genuine geometric meaning rather than being an opaque
encoding.
\end{remark}

In practice, the embedded dimension $d$ is often determined from the spectral gap of the eigenvalues of the transition matrix $\mathbf{M}$, under the assumption that the first $d$ leading eigenvalues provide a sufficiently accurate approximation of the diffusion distance between all pairs of points \cite{coifman2008diffusion}. However, this is not always the case, since some of the first eigenvectors may be higher harmonics of previous ones and therefore may not describe genuinely new directions along the dataset. To account for this issue, we further employ Parsimonious Diffusion Maps (PDMs \cite{dsilva2018parsimonious,holiday2019manifold,Patsatzis_2023,gallos2024data}) to select only the eigenvectors that provide unique directions along the dataset, thus yielding the most informative $d$-dimensional embedding.

Given the first $i-1$ eigenvectors, $\mathbf{u}_1,\ldots,\mathbf{u}_{i-1}$, we use a local linear regression model to fit the $i$-th eigenvector $\mathbf{u}_i$ against all previous ones. For each element $m=1,\ldots,M$, this reads
\begin{equation}
u_{i,m} \approx \alpha_{i,m} + \boldsymbol{\beta}_{i,m}^\top \mathbf{U}_{i-1,m},
\end{equation}
where $\alpha_{i,m}\in\mathbb{R}$, $\boldsymbol{\beta}_{i,m}\in\mathbb{R}^{i-1}$, and
\begin{equation}
\mathbf{U}_{i-1,m} = [u_{1,m},\ldots,u_{i-1,m}]^\top.
\end{equation}
The parameters $\alpha_{i,m}$ and $\boldsymbol{\beta}_{i,m}$ are obtained as the solution of the weighted least-squares problem
\begin{equation}
(\alpha_{i,m},\boldsymbol{\beta}_{i,m})
=
\underset{\alpha,\boldsymbol{\beta}}{\arg\min}
\sum_{k\neq m}
\exp\left(
-\frac{\|\mathbf{U}_{i-1,m}-\mathbf{U}_{i-1,k}\|_2^2}{\epsilon_{reg}^2}
\right)
\left(
u_{i,k}-(\alpha+\boldsymbol{\beta}^\top \mathbf{U}_{i-1,k})
\right)^2,
\end{equation}
where $\epsilon_{reg}$ is the kernel scale for the regression algorithm.
Following \cite{dsilva2018parsimonious}, we take $\epsilon_{reg}$ as one-third of the
median of the pairwise distances between $\mathbf{U}_{i-1,m}$, which empirically yields good results.
The exponential weights localize the regression around the point $m$ in the space spanned by the previously selected eigenvectors. Since functional dependence between DMs coordinates may be nonlinear globally but approximately linear locally, the weighted least-squares fit provides a local criterion for detecting redundant eigenvectors. Its solution is
\begin{equation}
\begin{bmatrix} \hat{\alpha}_{i,m} \\ \hat{\boldsymbol{\beta}}_{i,m} \end{bmatrix} = \left( Z_m^\top W_m Z_m \right)^{-1} Z_m^\top W_m \mathbf{u}_{i}^{(-m)}, \quad Z_m = \begin{bmatrix} z_1^\top \\ \vdots \\ z_{m-1}^\top \\ z_{m+1}^\top \\ \vdots \\ z_M^\top \end{bmatrix} \in \mathbb{R}^{(M-1)\times i},  z_k = \begin{bmatrix} 1 \\ \mathbf{U}_{i-1,k} \end{bmatrix} \in \mathbb{R}^{i},
\end{equation}
where 
\[ 
W_m = \operatorname{diag} \left( w_{m1},\ldots,w_{m,m-1},w_{m,m+1},\ldots,w_{mM} \right), \quad w_{mk} = \exp\left( -\frac{ \|\mathbf{U}_{i-1,m}-\mathbf{U}_{i-1,k}\|_2^2 }{\epsilon_{reg}^2} \right), \qquad k\neq m, 
\]
provided that $Z_m^\top W_m Z_m$ is not singular. For statistical tests, the covariance matrix of the estimated coefficients can then be approximated in the standard weighted least-squares form.

The normalized leave-one-out cross-validation error is measured through the local linear fitting coefficient
\begin{equation}
r_i
=
\sqrt{
\frac{
\sum_{m=1}^{M}
\left(
u_{i,m}-(\alpha_{i,m}+\boldsymbol{\beta}_{i,m}^\top \mathbf{U}_{i-1,m})
\right)^2
}{
\sum_{m=1}^{M} u_{i,m}^2
}
}.
\label{eq:LLFc}
\end{equation}
With this definition, a small or negligible value of $r_i$ indicates that the $i$-th eigenvector $\mathbf{u}_i$ can be predicted from the previous eigenvectors $\mathbf{u}_1,\mathbf{u}_2,\ldots,\mathbf{u}_{i-1}$ and therefore corresponds to a repeated eigendirection. Only the eigenvectors associated with sufficiently large values of $r_i$ are retained in order to obtain the most parsimonious representation.

The resulting embedding is constructed from the retained eigenpairs $\{\lambda_i,\mathbf{u}_i\}_{i=1}^{d}$. The restriction operator evaluated at a data point $\mathbf{x}_m$ is therefore given by
\begin{equation}
\Phi_M(\mathbf{x}_m)
=
(\lambda_1 u_{1,m},\ldots,\lambda_d u_{d,m})
\equiv
(y_{1,m},\ldots,y_{d,m})
=
\mathbf{y}_m \in \mathbb{R}^{d},
\qquad m=1,\ldots,M.
\label{eq:DMsmap}
\end{equation}

For new, unseen points, we employ the Nystr\"om method \cite{nystrom1929uber,coifman2006geometric,chiavazzo2014reduced,evangelou2022double,Patsatzis_2023}. 

\subsection{The Decoder: The solution of the pre-image problem with K-NN}
\label{sec:kNN}
Here, to solve the pre-image (lifting) problem, we have used a
$K$-nearest-neighbors ($K$-NN) approach combined with local convex
interpolation~\cite{chin2024enabling}. Given a target latent state
$\mathbf{y}$, let $\{\mathbf{y}_{S(j)}\}_{j=1}^{K}$ denote its
$K$ nearest neighbors among the latent representations of the training data,
and let $\{\mathbf{x}_{S(j)}\}_{j=1}^{K}\subset \mathcal{M}$ be the corresponding
high-dimensional snapshots. The reconstructed full-order state is then
approximated as a convex combination of these neighboring snapshots,
\begin{equation}
\hat{\mathbf{x}}
=
\Gamma_M(\mathbf{y})
=
\sum_{j=1}^{K} b_j \mathbf{x}_{S(j)} .
\end{equation}
The interpolation coefficients \(b_j\) are chosen so as to represent
\(\mathbf{y}\) locally in the embedded space and satisfy the convexity
constraints
\begin{equation}
\sum_{j=1}^{K} b_j = 1,
\qquad
b_j \geq 0,
\qquad j=1,\ldots,K .
\end{equation}
Equivalently, the weights may be obtained by solving the constrained local
least-squares problem
\begin{equation}
\{b_j\}_{j=1}^{K}
=
\arg\min_{\{b_j\}_{j=1}^{K}}
\left\|
\mathbf{y}
-
\sum_{j=1}^{K} b_j \mathbf{y}_{S(j)}
\right\|_2^2
\quad
\text{subject to}
\quad
\sum_{j=1}^{K} b_j = 1,
\quad
b_j \geq 0 .
\end{equation}
This defines a local convex approximation of the inverse map from the
latent manifold to the ambient state space and enables reconstruction of the
high-dimensional flow field associated with a predicted latent state. When
made explicit, the dependence of the weights on the query point is denoted
$b_j(\mathbf{y})$, as used in the consistency result below.

We now state and prove the following consistency result.
\begin{proposition}[Pointwise consistency of the convex $K$-NN pre-image map]
\label{prop:knn_preimage_consistency}
Let $\mathcal M\subset\mathbb R^N$ be a $d$-dimensional $C^2$
embedded manifold, let $\mathcal W\subset\mathcal M$ be open, and let
$\mathcal U\subset\mathbb R^d$ be open. Assume that
$(\mathcal W,\Phi)$, with
$\Phi:\mathcal W\to\mathcal U$,
is a $C^2$ coordinate chart on $\mathcal M$ and denote its inverse
parametrization by
\begin{equation}
\Gamma=\Phi^{-1}:
\mathcal U\to\mathcal W\subset\mathcal M\subset\mathbb R^N,
\end{equation}
i.e., $\Gamma$ is a $C^2$ embedding of $\mathcal U$ onto
$\mathcal W$.

Fix $\mathbf y\in\mathcal U$ and $K\in\mathbb N$. For each sample size $M$, let
$\mathbf y_{S_M(1)},\ldots,\mathbf y_{S_M(K)}$ be the latent points
selected by the $K$-NN construction, with corresponding points
$\mathbf x_{S_M(j)}\in\mathcal W$ satisfying
\begin{equation}
\mathbf y_{S_M(j)}
=
\Phi\bigl(\mathbf x_{S_M(j)}\bigr),
\qquad j=1,\ldots,K.
\end{equation}
Let the coefficients $b_{j,M}(\mathbf y)$ satisfy
\begin{equation}
b_{j,M}(\mathbf y)\geq0,
\qquad
\sum_{j=1}^{K}b_{j,M}(\mathbf y)=1,
\end{equation}
and define
\begin{equation}
\Gamma_M(\mathbf y)
=
\sum_{j=1}^{K}
b_{j,M}(\mathbf y)\mathbf x_{S_M(j)}.
\end{equation}
Set
\begin{equation}
q_M(\mathbf y)
=
\left\|
\mathbf y-
\sum_{j=1}^{K}
b_{j,M}(\mathbf y)\mathbf y_{S_M(j)}
\right\|_2
\end{equation}
and
\begin{equation}
h_M(\mathbf y)
=
\max_{1\leq j\leq K}
\left\|
\mathbf y_{S_M(j)}-\mathbf y
\right\|_2.
\end{equation}
Assume that
\begin{equation}
h_M(\mathbf y)\longrightarrow0
\qquad\text{as }M\to\infty.
\end{equation}
Then there exist $C_{\mathbf y}>0$ and $M_0\in\mathbb N$,
independent of $M$, such that
\begin{equation}
\left\|
\Gamma_M(\mathbf y)-\Gamma(\mathbf y)
\right\|_2
\leq
C_{\mathbf y}
\left(
q_M(\mathbf y)+h_M(\mathbf y)^2
\right),
\qquad M\geq M_0.
\end{equation}
Consequently,
\begin{equation}
\Gamma_M(\mathbf y)\longrightarrow\Gamma(\mathbf y).
\end{equation}
If in addition,
\begin{equation}
q_M(\mathbf y)
=
\mathcal O\!\left(h_M(\mathbf y)^2\right),
\end{equation}
then
\begin{equation}
\left\|
\Gamma_M(\mathbf y)-\Gamma(\mathbf y)
\right\|_2
=
\mathcal O\!\left(h_M(\mathbf y)^2\right).
\end{equation}
\end{proposition}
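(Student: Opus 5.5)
The plan is a second-order Taylor expansion of $\Gamma$ around $\mathbf y$, combined with the convexity of the weights. Since $\mathbf x_{S_M(j)}\in\mathcal W$ and $\Phi$ is a bijection onto $\mathcal U$, we have $\mathbf x_{S_M(j)}=\Gamma(\mathbf y_{S_M(j)})$, and therefore
\begin{equation*}
\Gamma_M(\mathbf y)-\Gamma(\mathbf y)=\sum_{j=1}^K b_{j,M}(\mathbf y)\bigl(\Gamma(\mathbf y_{S_M(j)})-\Gamma(\mathbf y)\bigr),
\end{equation*}
where we used $\sum_j b_{j,M}=1$.

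First I fix a closed ball $\overline{B}(\mathbf y,r)\subset\mathcal U$, which exists because $\mathcal U$ is open. The ball is convex and compact and $\Gamma$ is $C^2$, so $L_2:=\sup_{\overline B(\mathbf y,r)}\|D^2\Gamma\|$ is finite. Because $h_M(\mathbf y)\to0$, there is $M_0$ with $h_M(\mathbf y)\le r$ for all $M\ge M_0$. For such $M$, all neighbors $\mathbf y_{S_M(j)}$ lie in the ball, and so does the segment joining each of them to $\mathbf y$. Taylor's theorem with integral remainder, applied componentwise to the vector-valued map, then gives
\begin{equation*}
\Gamma(\mathbf y_{S_M(j)})-\Gamma(\mathbf y)=D\Gamma(\mathbf y)(\mathbf y_{S_M(j)}-\mathbf y)+R_j,\qquad \|R_j\|_2\le \tfrac12 L_2\,\|\mathbf y_{S_M(j)}-\mathbf y\|_2^2\le \tfrac12 L_2\,h_M(\mathbf y)^2 .
\end{equation*}

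Summing with the weights, the linear terms combine into
\begin{equation*}
D\Gamma(\mathbf y)\Bigl(\sum_j b_{j,M}\mathbf y_{S_M(j)}-\mathbf y\Bigr),
\end{equation*}
whose norm is at most $\|D\Gamma(\mathbf y)\|\,q_M(\mathbf y)$. The remainders are bounded by $\sum_j b_{j,M}\|R_j\|_2\le \tfrac12 L_2 h_M^2$, using $b_{j,M}\ge0$ and $\sum_j b_{j,M}=1$. Taking $C_{\mathbf y}=\max\{\|D\Gamma(\mathbf y)\|,\tfrac12 L_2\}$, enlarged slightly if needed to make it positive, yields the stated bound.

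For convergence I also need $q_M\to0$. Because the weights are convex, the combination $\sum_j b_{j,M}\mathbf y_{S_M(j)}$ lies in the convex hull of the neighbors, which sits inside $\overline B(\mathbf y,h_M)$. Hence $q_M\le h_M\to0$ automatically, and $\Gamma_M(\mathbf y)\to\Gamma(\mathbf y)$. The $\mathcal O(h_M^2)$ statement follows immediately by substituting $q_M=\mathcal O(h_M^2)$ into the bound.

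The only delicate point is making sure the Taylor expansion is legitimate: the segments from $\mathbf y$ to each neighbor must stay inside the domain $\mathcal U$, and the second derivative must be bounded uniformly there. Choosing the fixed ball and the threshold $M_0$ as above handles both, so I expect no step to be a genuine obstacle.
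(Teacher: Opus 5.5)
Your proposal is correct and follows essentially the same argument as the paper: a second-order Taylor expansion of $\Gamma$ on a fixed closed ball $\overline{B}(\mathbf y,\rho)\subset\mathcal U$, summation against the convex weights to split the error into $\|D\Gamma(\mathbf y)\|\,q_M(\mathbf y)$ plus $\tfrac12\sup\|D^2\Gamma\|\,h_M(\mathbf y)^2$, and the bound $q_M(\mathbf y)\le h_M(\mathbf y)$ to conclude convergence. The only cosmetic difference is that the paper takes $C_{\mathbf y}=\max\{1,\|D\Gamma(\mathbf y)\|_{\mathrm{op}},C_R\}$ to guarantee positivity, which matches your ``enlarge if needed'' remark.
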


\begin{proof}
Since $\mathcal U$ is open and $\mathbf y\in\mathcal U$, choose
$\rho>0$ such that
\begin{equation}
\overline{B(\mathbf y,\rho)}
:=
\left\{
\mathbf z\in\mathbb R^d:
\|\mathbf z-\mathbf y\|_2\leq\rho
\right\}
\subset\mathcal U.
\end{equation}
Define
\begin{equation}
C_R
=
\frac{1}{2}
\sup_{\mathbf z\in\overline{B(\mathbf y,\rho)}}
\left\|
D^2\Gamma(\mathbf z)
\right\|_{\mathrm{op}}.
\end{equation}
Here,
$\|D^2\Gamma(\mathbf z)\|_{\mathrm{op}}
:=
\sup_{\|\mathbf u\|_2=\|\mathbf v\|_2=1}
\|D^2\Gamma(\mathbf z)[\mathbf u,\mathbf v]\|_2$, where
$\mathbf u,\mathbf v\in\mathbb R^d$.
The constant $C_R$ is finite because $D^2\Gamma$ is continuous and
$\overline{B(\mathbf y,\rho)}$ is compact.

Since $h_M(\mathbf y)\to0$, there exists $M_0\in\mathbb N$ such
that
\begin{equation}
h_M(\mathbf y)<\rho,
\qquad M\geq M_0.
\end{equation}
Hence, for $M\geq M_0$, every line segment joining $\mathbf y$ to
$\mathbf y_{S_M(j)}$ lies in $\overline{B(\mathbf y,\rho)}$.

Taylor's theorem therefore gives
\begin{equation}
\Gamma(\mathbf y_{S_M(j)})
=
\Gamma(\mathbf y)
+
D\Gamma(\mathbf y)
\bigl(
\mathbf y_{S_M(j)}-\mathbf y
\bigr)
+
R_{j,M},
\end{equation}
where
\begin{equation}
\left\|R_{j,M}\right\|_2
\leq
C_R
\left\|
\mathbf y_{S_M(j)}-\mathbf y
\right\|_2^2.
\end{equation}
Since $\Gamma=\Phi^{-1}$, we have
\begin{equation}
\Gamma(\mathbf y_{S_M(j)})
=
\mathbf x_{S_M(j)}.
\end{equation}
Multiplying the Taylor expansion by $b_{j,M}(\mathbf y)$, summing
over $j$, and using the definitions of $\Gamma_M$ and the convex
weights, we obtain
\begin{equation}
\begin{aligned}
\Gamma_M(\mathbf y)-\Gamma(\mathbf y)
={}&
D\Gamma(\mathbf y)
\left(
\sum_{j=1}^{K}
b_{j,M}(\mathbf y)\mathbf y_{S_M(j)}
-
\mathbf y
\right)
+
\sum_{j=1}^{K}
b_{j,M}(\mathbf y)R_{j,M}.
\end{aligned}
\end{equation}
Consequently,
\begin{equation}
\begin{aligned}
\left\|
\Gamma_M(\mathbf y)-\Gamma(\mathbf y)
\right\|_2
\leq{}&
\left\|D\Gamma(\mathbf y)\right\|_{\mathrm{op}}
q_M(\mathbf y)
+
C_R
\sum_{j=1}^{K}
b_{j,M}(\mathbf y)
\left\|
\mathbf y_{S_M(j)}-\mathbf y
\right\|_2^2
\\
\leq{}&
\left\|D\Gamma(\mathbf y)\right\|_{\mathrm{op}}
q_M(\mathbf y)
+
C_Rh_M(\mathbf y)^2.
\end{aligned}
\end{equation}
The claimed estimate follows by taking
\begin{equation}
C_{\mathbf y}
=
\max
\left\{
1,
\left\|D\Gamma(\mathbf y)\right\|_{\mathrm{op}},
C_R
\right\}.
\end{equation}

Finally, convexity of the weights gives
\begin{equation}
\begin{aligned}
q_M(\mathbf y)
&=
\left\|
\sum_{j=1}^{K}
b_{j,M}(\mathbf y)
\bigl(
\mathbf y-\mathbf y_{S_M(j)}
\bigr)
\right\|_2
\leq
\sum_{j=1}^{K}
b_{j,M}(\mathbf y)
\left\|
\mathbf y-\mathbf y_{S_M(j)}
\right\|_2
\leq
h_M(\mathbf y).
\end{aligned}
\end{equation}
Thus $h_M(\mathbf y)\to0$ also implies $q_M(\mathbf y)\to0$, and
hence
\begin{equation}
\Gamma_M(\mathbf y)\longrightarrow\Gamma(\mathbf y).
\end{equation}
The second-order conclusion follows immediately when
$q_M(\mathbf y)=\mathcal O(h_M(\mathbf y)^2)$.
\end{proof}

We note that the $K$-NN convex-interpolation strategy adopted here is not the only possible approach for approximating the pre-image map. Other numerical analysis--informed lifting strategies have been proposed, including double Diffusion Maps based on Geometric Harmonics \cite{evangelou2022double,Patsatzis_2023} and, more recently, RANDSMAPs, which provide random-feature/multiscale neural decoders with explicit mass-preservation constraints~\cite{patsatzis2026}. In the present work, we employ the $K$-NN convex interpolation approach because of its simplicity, non-intrusive character, and its favorable conservation properties for problems in which the relevant invariant is linear, such as total mass. By contrast, geometric harmonics-based pre-image approximations do not, in general, enforce such conservation constraints by construction \cite{patsatzis2026}. A detailed comparison of mass-preserving and non-mass-preserving pre-image strategies, including RANDSMAPs, is provided in~\cite{patsatzis2026}.

\subsection{Convolutional Neural Network-based Autoencoder}
\label{subsec:AE}
Autoencoders are self-supervised machine-learning models trained to map input data $\mathbf{x}_m \in \mathbb{R}^{N}$ to reconstructed outputs $\hat{\mathbf{x}}_m \in \mathbb{R}^{N}$ of the same dimensionality by forcing the information to pass through a lower-dimensional bottleneck. In this work, we employ convolutional autoencoders, which are particularly well suited for spatially distributed data such as fluid flows because of their ability to capture coherent structures through localized filters. We follow the theoretical formulation and numerical implementation presented in \cite{Fukagata_2025}.

A basic autoencoder consists of two components: an encoder and a decoder. In the present notation, these correspond to the restriction and lifting operators $\Phi_M$ and $\Gamma_M$, respectively. In contrast to other numerical analysis--based manifold-learning techniques, in autoencoders both operators are explicitly parameterized by neural networks. The encoder maps the input field $\mathbf{x}_m$ onto a latent vector,
\begin{equation}
\mathbf{y}_m = \Phi_M(\mathbf{x}_m),
\qquad \mathbf{y}_m \in \mathbb{R}^{d},
\end{equation}
where $d \ll N$ is the latent dimension. The decoder reconstructs the input from this compressed representation,
\begin{equation}
\hat{\mathbf{x}}_m = \Gamma_M(\mathbf{y}_m),
\end{equation}
so that the overall mapping reads
\begin{equation}
\hat{\mathbf{x}}_m = (\Gamma_M \circ \Phi_M)(\mathbf{x}_m).
\end{equation}
When accurate reconstruction is achieved, i.e.\ $\hat{\mathbf{x}}_m \approx \mathbf{x}_m$, the latent vector $\mathbf{y}_m$ can be interpreted as a reduced-order representation of the flow field.

It is important to note that, unlike nonlinear manifold-learning methods such as Diffusion Maps, autoencoders do not provide an intrinsic criterion for determining the latent dimension $d$, which must therefore be treated as a hyperparameter of the model. More sophisticated approaches, such as $\beta$-variational autoencoders \cite{SoleraRico2024}, attempt to address this limitation by promoting a trade-off between reconstruction accuracy and latent-space regularization, thereby encouraging parsimonious and disentangled representations. In the present work, the latent dimension is fixed for both approaches, as estimated independently of the autoencoder using DMs (see Section~\ref{sec:results}). This value is not tuned for either method: no model selection over 
$d$ was performed for the autoencoder, and the spectral gap in DM estimates the intrinsic dimension of the manifold, which need not minimize reconstruction or prediction error. Optimizing $d$ separately for each method would be substantially more expensive for the autoencoder, since a separate model must be trained for each candidate dimension, whereas DM embeddings of different dimensions follow by truncating the same eigenbasis. The comparison in Section~\ref{sec:results} is therefore performed at a common fixed dimension.

The network parameters, namely the weights and biases collected in $\mathbf{W}$, are determined by minimizing a reconstruction loss over the training dataset. In this work, we adopt the mean squared error (MSE),
\begin{equation}
\mathcal{L}(\mathbf{W})
=
\frac{1}{M}
\sum_{m=1}^{M}
\left\|
\mathbf{x}_m-\hat{\mathbf{x}}_m
\right\|_2^2,
\end{equation}
where $M$ is the number of training snapshots. The optimal parameters are defined as
\begin{equation}
\mathbf{W}^{*}
=
\arg\min_{\mathbf{W}} \mathcal{L}(\mathbf{W}).
\end{equation}

The encoder and decoder are constructed using convolutional neural networks (CNNs), which exploit the spatial structure of the input data \cite{LeCun_1998}. For the $l$-th convolutional layer, let the input tensor be
\begin{equation}
\mathbf{z}^{(l-1)} \in \mathbb{R}^{n_x^{(l-1)} \times n_y^{(l-1)} \times n_c^{(l-1)}},
\end{equation}
where $n_x^{(l-1)}$ and $n_y^{(l-1)}$ denote the spatial dimensions and $n_c^{(l-1)}$ the number of input channels. The output tensor $\mathbf{z}^{(l)} \in \mathbb{R}^{n_x^{(l)} \times n_y^{(l)} \times n_c^{(l)}}$ is computed as
\begin{equation}
c^{(l)}_{ijr}
=
\sum_{k=1}^{n_c^{(l-1)}}
\sum_{p=0}^{H-1}
\sum_{q=0}^{H-1}
w^{(l)}_{pqkr}\,
z^{(l-1)}_{i+p-G,\,j+q-G,\,k}
+
b^{(l)}_{r},
\end{equation}
\begin{equation}
z^{(l)}_{ijr}=\phi_{\mathrm{act}}\left(c^{(l)}_{ijr}\right),
\end{equation}
where $(i,j)$ denote the spatial indices of the feature maps, $k$ indexes the input channels, and $r$ the output channels. The indices $(p,q)$ refer to the position within the convolutional kernel of size $H \times H$, and $G=\lfloor H/2 \rfloor$. The coefficients $w^{(l)}_{pqkr}$ are the convolutional weights, and $b^{(l)}_{r}$ is the bias associated with the $r$-th output channel. In the present work, convolutional layers use same padding, so that the spatial dimensions are preserved across each convolution. The nonlinear activation function $\phi_{\mathrm{act}}$ is chosen as the hyperbolic tangent for all hidden layers. 

Spatial dimensionality reduction in the encoder is achieved through max-pooling layers, which halve the resolution at each stage. Conversely, the decoder employs up-sampling layers with the same factor to progressively recover the original resolution. These operations are not trainable and are used solely to control tensor dimensions while enabling hierarchical feature extraction.

At the bottleneck, the feature maps are flattened and mapped to the latent space through a fully connected (dense) layer. Denoting by
\begin{equation}
\mathbf{z}^{(l-1)} \in \mathbb{R}^{n^{(l-1)}}
\end{equation}
the flattened input vector, a dense layer produces an output $\mathbf{z}^{(l)} \in \mathbb{R}^{n^{(l)}}$ according to
\begin{equation}
c^{(l)}_{i}
=
\sum_{j=1}^{n^{(l-1)}} w^{(l)}_{ij}\,z^{(l-1)}_{j}
+
b^{(l)}_{i},
\end{equation}
\begin{equation}
z^{(l)}_{i}=\phi_{\mathrm{act}}\left(c^{(l)}_{i}\right),
\end{equation}
where $j$ indexes the input neurons, $i$ the output neurons, $w^{(l)}_{ij}$ are the weights, and $b^{(l)}_{i}$ are the biases of the dense layer. The specific CNN-based autoencoder architecture employed in this work is selected upon a systematic analysis in which the number of trainable parameters---one of the key hyperparameters in AE-based manifold learning--is varied. More details are given in Section~\ref{subsec:res_coord}.

\subsection{Proper Orthogonal Decomposition}
\label{subsec:POD}
The celebrated Proper Orthogonal Decomposition (POD) technique \cite{Lumley} decomposes the fluctuations of the velocity field $\mathbf{x}(x,y,t)$ with respect to the temporal mean $\overline{\mathbf{x}}(x,y)$, namely
\begin{equation}
\mathbf{x}'(x,y,t)=\mathbf{x}(x,y,t)-\overline{\mathbf{x}}(x,y),
\end{equation}
as
\begin{equation}
\mathbf{x}'(x,y,t)=\sum_{i=1}^{\infty} y_i(t)\,\boldsymbol{\varphi}_i(x,y),
\label{eq:POD_def}
\end{equation}
where the spatial modes $\boldsymbol{\varphi}_i(x,y)$ are orthonormal and $y_i(t)$ denotes the temporal coefficient associated with the $i$-th POD mode.
Let
\begin{equation}
\mathbf{S}'=\{\mathbf{x}'_m\}_{m=1,\ldots,M} \in \mathbb{R}^{N \times M}
\end{equation}
denote the snapshot matrix of $M$ realizations of the fluctuation field, each of dimension $N$. The discrete POD modes can be efficiently computed via the method of snapshots \cite{Sirovich}, leading to the eigenvalue problem
\begin{equation}
\mathbf{S}'^\top \mathbf{S}' \boldsymbol{\phi}_i = \sigma_i \boldsymbol{\phi}_i,
\qquad i=1,\ldots,M,
\label{eq:eigprobQQt}
\end{equation}
where $\sigma_i \ge 0$ are the eigenvalues sorted in descending order, $\sigma_1 \ge \cdots \ge \sigma_M$. The POD modes are then obtained as
\begin{equation}
\boldsymbol{\varphi}_i = \frac{\mathbf{S}' \boldsymbol{\phi}_i}{\sqrt{\sigma_i}}.
\end{equation}
By retaining the leading $d \ll M < N$ modes, one constructs a reduced-order POD basis that provides a linear parameterization of the data manifold. Using the POD basis, the restriction operator $\Phi_M$ in Eq.~\eqref{eq:HD2LDmap} is defined as
\begin{equation}
\mathbf{y}=\Phi_M(\mathbf{x})=\mathbf{W}^\top\bigl(\mathbf{x}-\overline{\mathbf{x}}\bigr),
\end{equation}
where
\begin{equation}
\mathbf{W}=[\boldsymbol{\varphi}_1,\ldots,\boldsymbol{\varphi}_d] \in \mathbb{R}^{N\times d}
\end{equation}
contains the first $d$ POD modes, and
\begin{equation}
\mathbf{y}=[y_1,\ldots,y_d]^\top
\end{equation}
contains the corresponding POD coefficients. This projection can be applied to both training snapshots and previously unseen states, thus providing latent coordinates suitable for subsequent ROM construction.
A key advantage of POD is the availability of a closed-form expression for the pre-image mapping $\Gamma_M$, enabling straightforward reconstruction of the high-dimensional field. In particular, for any latent vector $\mathbf{y}$, the lifting operator reads
\begin{equation}
\mathbf{x}=\Gamma_M(\mathbf{y})=\overline{\mathbf{x}}+\mathbf{W}\mathbf{y}.
\end{equation}
Despite its efficiency and interpretability, POD may fail to provide a geometrically consistent embedding or to identify the intrinsic dimensionality of the system. This limitation becomes critical in complex flow scenarios, such as transitions between multiple regimes or dynamics governed by codimension-2 bifurcations, where strongly nonlinear structures cannot be efficiently embedded in a linear subspace.
	
\section{Benchmark flow configuration}
\label{sec:flow_config}

\begin{figure}
    \centering
    \includegraphics[scale=0.8]{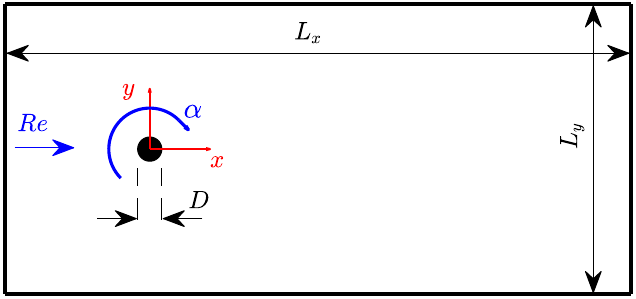}
    \caption{\label{fig:domain} Physical domain of the flow past a rotating cylinder. The governing parameters are defined as $Re = \rho U_\infty D/\mu$ and $\alpha = \Omega D / (2 U_\infty)$.}
\end{figure}

For our illustration, we focus on the incompressible two-dimensional flow past a rotating circular cylinder (see Fig.~\ref{fig:domain}). The flow is controlled by two parameters: the Reynolds number $Re = \rho U_\infty D/\mu$ and the rotation rate $\alpha = \Omega D / (2 U_\infty)$. Here, $\Omega$ is the dimensional angular velocity of the cylinder, $U_\infty$ is the free-stream velocity, $D$ is the cylinder diameter, $\mu$ is the dynamic viscosity, and $\rho$ is the constant fluid density. In the following, we consider clockwise rotation of the cylinder surface, corresponding to $\alpha > 0$.

The flow motion inside the domain is governed by the two-dimensional incompressible Navier--Stokes equations,

\begin{subequations}
\begin{equation}
\dfrac{\partial u}{\partial x} + \dfrac{\partial v}{\partial y} = 0,
\label{eq:continuity}
\end{equation}
\begin{equation}
\dfrac{\partial u}{\partial t} + u \dfrac{\partial u}{\partial x} + v \dfrac{\partial u}{\partial y}
= -\dfrac{\partial p}{\partial x} + \dfrac{1}{Re}\left(\dfrac{\partial^2 u}{\partial x^2} + \dfrac{\partial^2 u}{\partial y^2}\right),
\label{eq:momentum_u}
\end{equation}
\begin{equation}
\dfrac{\partial v}{\partial t} + u \dfrac{\partial v}{\partial x} + v \dfrac{\partial v}{\partial y}
= -\dfrac{\partial p}{\partial y} + \dfrac{1}{Re}\left(\dfrac{\partial^2 v}{\partial x^2} + \dfrac{\partial^2 v}{\partial y^2}\right),
\label{eq:momentum_v}
\end{equation}
\end{subequations}

where $u \equiv u(x,y,t)$ and $v \equiv v(x,y,t)$ denote the streamwise and cross-stream velocity components, respectively, defined over the two-dimensional spatial domain $(x,y)$ and evolving in time $t$. Note that Eqs.~\eqref{eq:continuity}--\eqref{eq:momentum_v} are written in dimensionless form, with velocity and length scaled by the reference quantities $U_\infty$ and $D$, respectively. Time is scaled by $D/U_\infty$, and pressure by $\rho U_\infty^2$.

The computational domain is a rectangle excluding the interior of the cylinder, with sides of length $L_x = 26D$ and $L_y = 12D$. The cylinder center is located at the streamwise position $x = 6D$. At the domain inlet (the left side of the rectangle), a uniform velocity profile is prescribed, namely $\mathbf{u} \equiv (u,v) = (1,0)$. The same values are also assigned as initial conditions to start the computations. A standard free-outflow boundary condition is enforced at the domain outlet (the right side of the rectangle), while the top and bottom boundaries are equipped with homogeneous Neumann boundary conditions for all variables. On the cylinder surface, no-slip boundary conditions with prescribed surface rotation are imposed such that
\begin{equation}
\mathbf{u}\cdot\mathbf{t} = \alpha,
\qquad
\mathbf{u}\cdot\mathbf{n} = 0,
\end{equation}
where $\mathbf{t}$ and $\mathbf{n}$ denote the tangential and normal unit vectors to the surface in the $(x,y)$ plane, respectively.

A uniform structured grid is used to discretize the physical domain, with mesh spacing $\Delta x = \Delta y = 0.05$.
This resolution corresponds to 20 grid cells per characteristic length $D$, resulting in a total of $N_g = N_x \times N_y = 124{,}800$ grid points. Such a spatial resolution is required to accurately capture the influence of cylinder rotation on the downstream evolution of the flow. Numerical simulations are conducted over the parameter range $Re \in [40,80]$ and $\alpha \in [4.5,6.5]$. The streamwise ($u$) and cross-stream ($v$) velocity components are sampled with a time step $\Delta t = 0.2$ over a total simulation time $T = 200$. As a result, 1000 temporal realizations of the velocity field are obtained for each combination of the ($Re$,$\alpha$) parameters. More details on the numerical simulation of Eqs.~\eqref{eq:continuity}--\eqref{eq:momentum_v} and on the generation of the datasets used to train and test the different manifold-learning methods employed in this work are given in Appendix~\ref{app:NS_eq}.

Snapshots of the simulations are shown in Fig.~\ref{fig:2D_maps} in terms of instantaneous two-dimensional contour maps of the cross-stream velocity component $v$ for different values of $Re$ and $\alpha$: the Reynolds number increases from left to right, while the rotation rate increases from top to bottom. Moreover, Fig.~\ref{DNS_var_t} reports the time evolution of the velocity signal $v(\bar{x},\bar{y},t)$ recorded at the spatial location $(\bar{x},\bar{y}) = (5,0)$ as the Reynolds number increases from $Re=40$ to $Re=80$ at fixed $\alpha = 5.5$ (panel (a)), and as the rotation rate increases from $\alpha=4.5$ to $\alpha=6.5$ at fixed $Re=70$ (panel (b)).

For relatively low Reynolds numbers ($Re = 40$ and $50$), the flow remains steady for all values of the rotation rate $\alpha$ (see panels (a), (d), and (g) in Fig.~\ref{fig:2D_maps}, and the black and red curves in Fig.~\ref{DNS_var_t}(a)). The cross-stream velocity component $v$ becomes uniform in the far wake, while its spatial distribution around the cylinder reflects the imposed clockwise rotation ($\alpha > 0$): positive ($v > 0$) upstream of the cylinder and negative ($v < 0$) downstream, consistently with the induced circulation.

\begin{figure}
    \centering
    \includegraphics[scale=0.8]{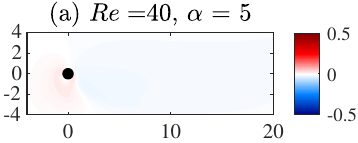}
    \includegraphics[scale=0.8]{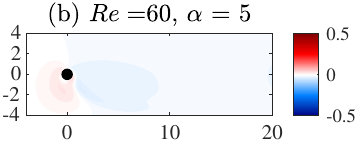}
    \includegraphics[scale=0.8]{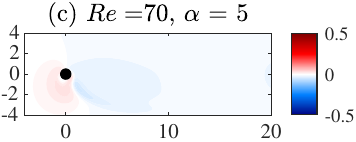}\\
    \includegraphics[scale=0.8]{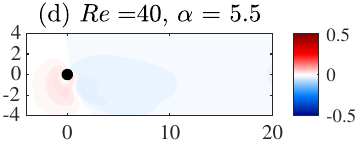}
    \includegraphics[scale=0.8]{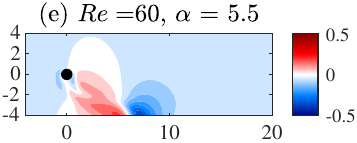}
    \includegraphics[scale=0.8]{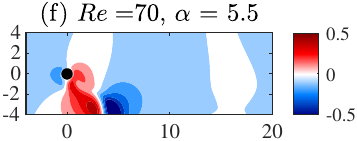}\\
    \includegraphics[scale=0.8]{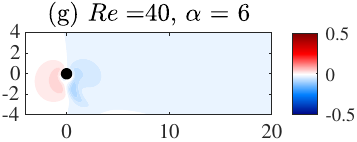}
    \includegraphics[scale=0.8]{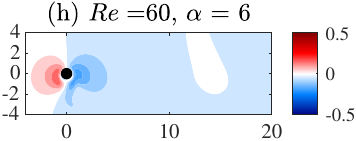}
    \includegraphics[scale=0.8]{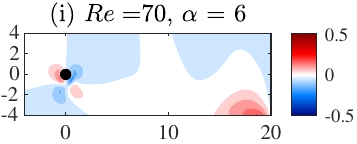}\\
    \caption{\label{fig:2D_maps} Navier--Stokes velocity-field snapshots (the $v$ component) for different values of the governing parameters $(Re,\alpha)$ in the high-dimensional physical space $x$--$y$.}
\end{figure}

\begin{figure}
    \centering
    \includegraphics[scale=0.8]{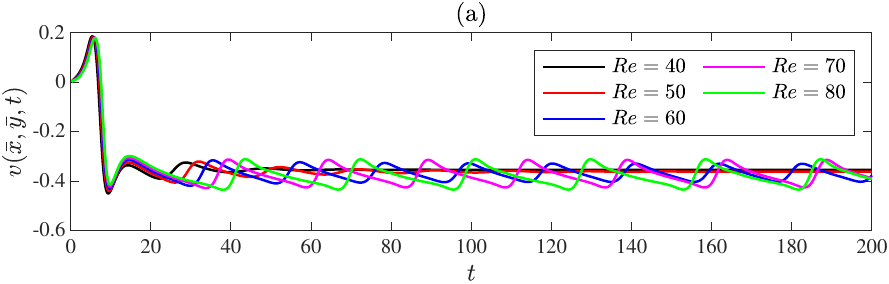} \\
    \vspace{0.2cm}
    \includegraphics[scale=0.8]{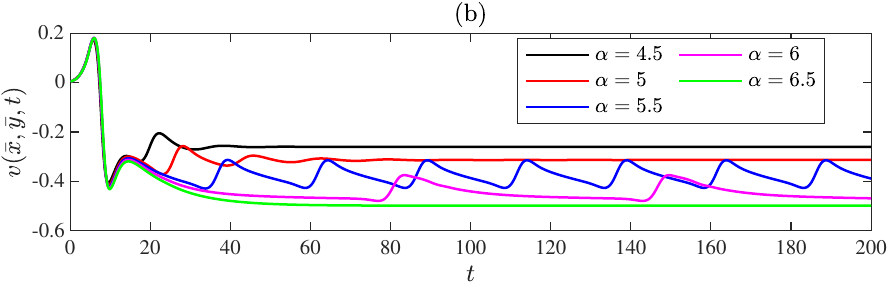}
    \caption{\label{DNS_var_t} Time evolution of the Navier--Stokes velocity field (the $v$ component) at the spatial location $(\bar{x},\bar{y}) = (5,0)$ by varying $Re$ at fixed $\alpha = 5.5$ (a), and by varying $\alpha$ at fixed $Re = 70$ (b).}
\end{figure}

As the Reynolds number increases to $Re = 60$, the flow undergoes a qualitative change, and three distinct regimes emerge as $\alpha$ is varied. For $\alpha \leq 5$, the flow remains steady (Fig.~\ref{fig:2D_maps}(b)). In the intermediate range $5 < \alpha < 6$, the flow becomes unsteady (Fig.~\ref{fig:2D_maps}(e)), with the onset of time-periodic vortex shedding, as shown by the blue curve in Fig.~\ref{DNS_var_t}(a). This regime corresponds to the emergence of a stable limit cycle in the continuous NS equations, consistent with the crossing of an Andronov--Hopf bifurcation as $\alpha$ is increased. Finally, for $\alpha \geq 6$, the flow returns to a steady state (Fig.~\ref{fig:2D_maps}(h)), indicating a restabilization of the steady solution.

The unsteady regime is characterized by a downward-deflected vortex street, resulting from the combined effects of inertia and rotation, which break the symmetry of the wake. This asymmetric shedding persists and becomes more pronounced as the Reynolds number is further increased to $Re = 70$ and $80$ (see panels (c), (f), and (i) in Fig.~\ref{fig:2D_maps}, and the magenta and green curves in Fig.~\ref{DNS_var_t}(a)). In this regime, the system exhibits a limit cycle whose properties depend sensitively on $\alpha$. In particular, the shedding period increases monotonically with $\alpha$, as evidenced by the comparison between the blue and magenta curves in Fig.~\ref{DNS_var_t}(b).

The progressive increase in the oscillation period is a key dynamical signature of this flow configuration and reflects the underlying bifurcation structure in the $(Re,\alpha)$ parameter space. As the rotation rate approaches $\alpha = 6.5$, the period diverges and the oscillations are suppressed, giving rise to a new steady regime (green curve in Fig.~\ref{DNS_var_t}(b)). This behavior is characteristic of a global homoclinic bifurcation in which the limit cycle collides with a saddle equilibrium. At this point, the trajectory spends increasingly long times in the vicinity of the saddle, leading to the observed divergence of the period before the cycle ultimately disappears.

Overall, this sequence of transitions can be interpreted within the framework of a codimension-2 Bogdanov--Takens bifurcation in the $(Re,\alpha)$ parameter space, where Hopf, saddle-node, and homoclinic bifurcation curves collide~\cite{Sierra}. In this scenario, the Hopf bifurcation gives rise to the limit cycle, the saddle-node bifurcation governs the appearance of multiple steady solutions, and the homoclinic bifurcation marks the disappearance of the limit cycle through its interaction with a saddle point. Such a bifurcation scenario, involving the coexistence of multiple local and global bifurcations, makes the selected flow configuration a stringent benchmark for reduced-order modelling approaches, highlighting the need for nonlinear manifold-learning techniques, as will be demonstrated in Section~\ref{sec:results}.

\section{Results}
\label{sec:results}
The numerical framework introduced in Section~\ref{sec:methodology} is here applied to construct surrogate reduced-order models (ROMs) for the discrete-time maps of the latent dynamics of the rotating-cylinder flow. 
We consider a set of spatio-temporal snapshots $\{\mathbf{x}_m\}_{m=1,\ldots,M} \subset \mathbb{R}^N$, obtained from numerical simulations of the Navier--Stokes equations (Eqs.~\eqref{eq:continuity}--\eqref{eq:momentum_v}). Each snapshot represents the discretized velocity field (both $u$ and $v$ components) over the spatial domain.

We first examine the identification of the reduced-order coordinates (Section~\ref{subsec:res_coord}), comparing the three manifold-learning techniques considered in this work: POD, Parsimonious Diffusion Maps (PDMs), and autoencoders (AEs). All details regarding the generation of the datasets used to train and test the different methods are given in Appendix~\ref{app:NS_eq}. The resulting POD-, PDMs-, and AE-based ROMs are then employed for long-term prediction of the latent dynamics. Finally, the ROM solutions are reconstructed in the high-dimensional space, enabling a direct comparison with the corresponding Navier--Stokes flow fields. Such a comparison is quantified by the reconstruction error $\varepsilon_m$ associated with each snapshot $m=1, \dots, M$, as defined in Eq.~\eqref{eq:reconstruction_error} in Appendix~\ref{app:NS_eq}. 

Note that, since we compare different methods in this section, we refer to the manifold coordinates $\mathbf{y}$ obtained with the different methods using different symbols, while remaining consistent with the generic latent coordinate $\mathbf{y}$ introduced in Section~\ref{sec:methodology}. In particular, we denote $\mathbf{y} = \boldsymbol{\psi}$ for PDMs, $\mathbf{y} = \mathbf{a}$ for POD, and $\mathbf{y} = \boldsymbol{\gamma}$ for AE.

\subsection{Reduced-order coordinate identification}
\label{subsec:res_coord}

\begin{figure}
    \centering
    \includegraphics[scale=0.8]{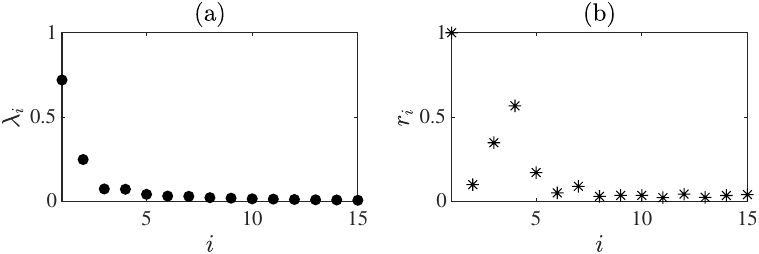}\\
    \vspace{0.2cm}
    \includegraphics[scale=0.8]{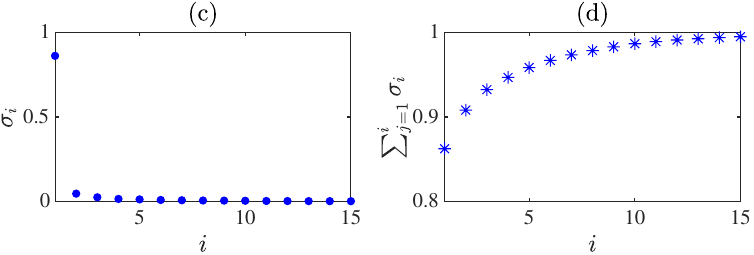}
    \caption{\label{DMs-POD_spectrum} Diffusion Maps eigenvalue spectrum $\lambda_i$ (a) and corresponding local linear fitting coefficients $r_i$ for the parsimonious selection (b) as functions of the mode index $i$. In (c) and (d), the corresponding POD eigenvalues $\sigma_i$ and their cumulative sum are reported, respectively.}
\end{figure}

The Diffusion Maps eigenvalues $\lambda_i$ and the corresponding local linear fitting coefficients $r_i$ are shown in Fig.~\ref{DMs-POD_spectrum}(a)--(b), respectively, for $i = 1, \dots, 15$. The POD eigenvalues $\sigma_i$ and their cumulative sum are reported in Fig.~\ref{DMs-POD_spectrum}(c)--(d), respectively.

A key result is that Diffusion Maps provide a more rigorous and physically meaningful identification of the latent space dimension compared to POD. The POD spectrum (Fig.~\ref{DMs-POD_spectrum}(c)) does not exhibit a clear separation of scales: while the first mode is dominant, the remaining modes decay gradually without a distinct spectral gap. As a result, standard selection criteria become ambiguous. For instance, a gap-based criterion would suggest retaining only the first mode, which is clearly insufficient to represent the system dynamics, whereas an energy-based criterion (e.g., $99\%$ cumulative energy) requires approximately 15 modes, leading to a significantly less parsimonious representation. Moreover, the cumulative energy distribution (Fig.~\ref{DMs-POD_spectrum}(d)) is inherently monotonic, and therefore does not naturally support a sharp truncation.
In contrast, the DMs spectrum displays a clear separation between the first five modes and the remaining ones, as evidenced by the presence of a pronounced knee in the eigenvalues $\lambda_i$ for $i \geq 5$ (Fig.~\ref{DMs-POD_spectrum}(a)). This separation is further corroborated by the behavior of the local linear fitting coefficients $r_i$, which are significantly larger for the first five modes, while $r_i \approx 0$ for the others. Together, these indicators consistently identify the first five modes as the relevant degrees of freedom governing the latent flow dynamics. Accordingly, the choice of five DMs modes is directly supported by both the spectral structure and the residual distribution. This choice yields a parsimonious yet accurate representation of the flow over the entire $(Re, \alpha)$ parameter space, as will be shown in the remainder of this section.

These observations highlight a key advantage of nonlinear manifold learning via DMs: it enables a more rigorous identification of the intrinsic latent dimension of fluid flows than classical POD-based approaches, while also promoting a compact and efficient reduced-order representation. Moreover, such a latent representation is physically meaningful, as the leading DMs coordinates $\psi_i$ ($i = 1, \dots, 5$) are directly related to the physical mechanisms underlying transitions among the different flow regimes induced by variations in the governing parameters $Re$ and $\alpha$, as shown in Fig.~\ref{DMs_embedding_dynamics}. For all values of $Re$, the latent coordinates exhibit a stationary steady state for both low ($\alpha = 4.5$, Fig.~\ref{DMs_embedding_dynamics}(a)--(c)) and high ($\alpha = 6.5$, Fig.~\ref{DMs_embedding_dynamics}(g)--(i)) rotation rates. Note that the two steady states are clearly distinguished by the values of $\psi_1$ reached at long times $t$: $\psi_1 > 0$ for $\alpha = 4.5$ and $\psi_1 < 0$ for $\alpha = 6.5$. These correspond to the two different high-dimensional stable steady solutions existing in this range of $\alpha$, as discussed in Section~\ref{sec:flow_config}. Moreover, for $\alpha = 5.5$, the latent dynamics exhibit a transition from asymptotically steady conditions at $Re = 45$ (Fig.~\ref{DMs_embedding_dynamics}(d)) to a stable limit cycle at $Re = 75$ (Fig.~\ref{DMs_embedding_dynamics}(f)), with periodic oscillations captured across all five parsimonious DMs coordinates. The transition from steady to periodic behavior occurs through a long-time steady regime that exhibits damped oscillations, observed at $Re = 55$ (Fig.~\ref{DMs_embedding_dynamics}(e)). This is an important feature and, as we will see later in this section, surrogate models based on the DMs coordinates can reproduce such a regime, whereas POD-based ROMs may fail.

\begin{figure}
    \centering
    \includegraphics[scale=0.8]{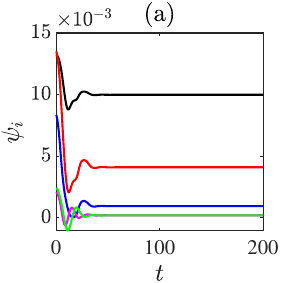}
    \includegraphics[scale=0.8]{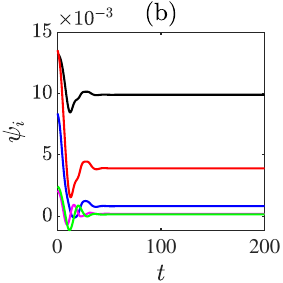}
    \includegraphics[scale=0.8]{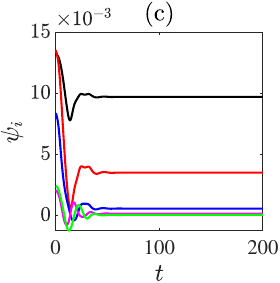}
    \includegraphics[scale=0.8]{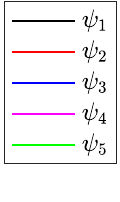}\\
    \vspace{0.2cm}
    \includegraphics[scale=0.8]{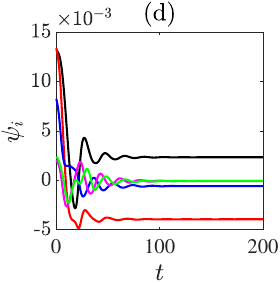}
    \includegraphics[scale=0.8]{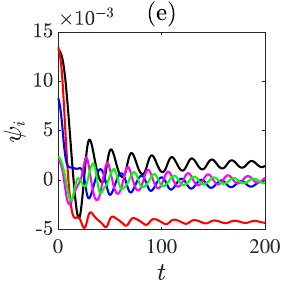}
    \includegraphics[scale=0.8]{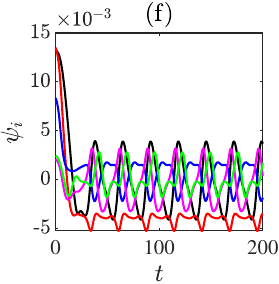}
    \includegraphics[scale=0.8]{Figures/DMs_embedding_legend}\\
    \vspace{0.2cm}
    \includegraphics[scale=0.8]{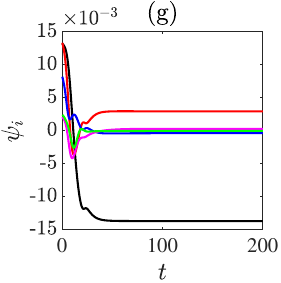}
    \includegraphics[scale=0.8]{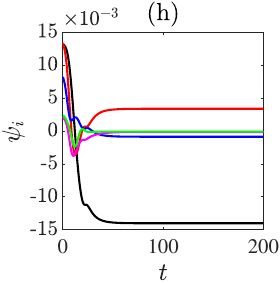}
    \includegraphics[scale=0.8]{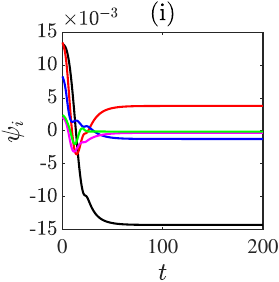}
    \includegraphics[scale=0.8]{Figures/DMs_embedding_legend}
    \caption{\label{DMs_embedding_dynamics} Low-dimensional (latent) dynamics identified by the PDMs coordinates $\psi_i$ ($i=1, \dots, 5$) as functions of time $t$ for different values of $\alpha$ (from top to bottom) and $Re$ (from left to right): $\alpha = 4.5$ ((a)--(c)); $\alpha = 5.5$ ((d)--(f)); $\alpha = 6.5$ ((g)--(i)); $Re = 45$ ((a), (d), and (g)); $Re = 55$ ((b), (e), and (h)); $Re = 75$ ((c), (f), and (i)).}
\end{figure}

We now turn to the autoencoder-based embedding used for comparison with PDMs and POD. To select an appropriate architecture, we trained six different convolutional autoencoder configurations, AE$_1$ through AE$_6$, sharing the same overall encoder--decoder architecture (see Section~\ref{subsec:AE}) but with a progressively increasing number of trainable parameters. Fig.~\ref{Manifold_baseline_recon_error_AUTOENCODERS} and Table~\ref{tab:autoencoder_error_statistics} report, respectively, the resulting baseline reconstruction error $\varepsilon_m$ and the associated training cost and parameter count for each of the six architectures, evaluated on both the training and test datasets.

\begin{figure}
    \centering
    \includegraphics[scale=0.8]{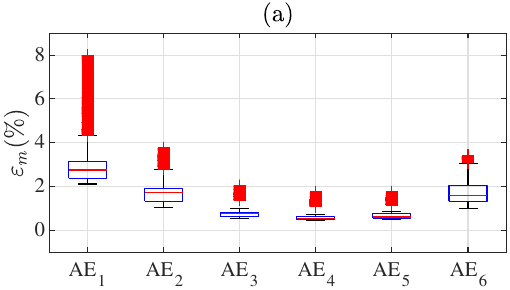}
    \hspace{0.2cm}
    \includegraphics[scale=0.8]{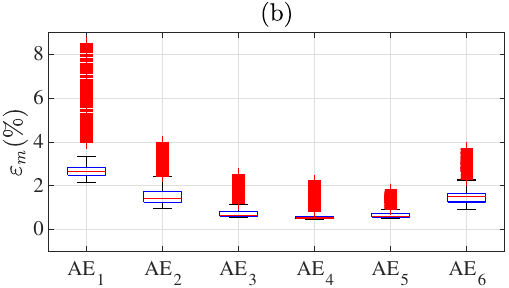}
    \caption{\label{Manifold_baseline_recon_error_AUTOENCODERS} Baseline reconstruction error $\varepsilon_m$ of six different AE embeddings (AE$_1$--AE$_6$; see Table~\ref{tab:autoencoder_error_statistics}) with increasing numbers of trainable parameters, evaluated over $m = 1, \dots, 4500$ snapshots obtained for different $Re$ and $\alpha$ values sampled from the training (a) and test (b) datasets. In each box plot, the red line indicates the median, the blue box edges correspond to the 25$^{\text{th}}$ and 75$^{\text{th}}$ percentiles, the black whiskers extend to the most extreme values within 1.5 times the inter-quartile range, and the red markers denote values beyond this range.}
\end{figure}
\begin{table}
    \centering
    \setlength{\tabcolsep}{5pt}
    {\rowcolors{1}{white}{cyan!8}
        \begin{tabularx}{\textwidth}{lCCCC}
            \toprule
            \textbf{Embedding}
            & \textbf{Training time (CPU-hours)}
            & \textbf{Trainable parameters}
            & $\boldsymbol{\varepsilon_m}$ (train dataset)
            & $\boldsymbol{\varepsilon_m}$ (test dataset) \\
            \midrule
            AE$_1$ & 19.43 & 1803 & $2.98\,(2.11,\,5.47)\,\%$ & $2.97\,(2.15,\,5.78)\,\%$ \\
            AE$_2$ & 30.32 & 6471 & $1.71\,(1.05,\,3.24)\,\%$ & $1.69\,(0.97,\,3.45)\,\%$ \\
            AE$_3$ & 69.73 & 28603 & $0.82\,(0.54,\,1.66)\,\%$ & $0.86\,(0.55,\,1.78)\,\%$ \\
            \rowcolor{green!20}
            AE$_4$ & 79.69 & 275735 & $0.64\,(0.45,\,1.31)\,\%$ & $0.70\,(0.46,\,1.43)\,\%$ \\
            AE$_5$ & 239.36 & 1091751 & $0.71\,(0.50,\,1.40)\,\%$ & $0.75\,(0.48,\,1.52)\,\%$ \\
            AE$_6$ & 384.62 & 4344647 & $1.66\,(0.99,\,2.89)\,\%$ & $1.62\,(0.90,\,3.08)\,\%$ \\
            \bottomrule
        \end{tabularx}
    }
    \caption{Summary of the baseline reconstruction error $\varepsilon_m$ of six different AE embeddings with increasing numbers of trainable parameters (reported in the third column) on the train (fourth column) and test (fifth column) datasets. The error $\varepsilon_m$ is shown in terms of the average over $m = 1, \dots, 4500$ snapshots obtained for different $Re$ and $\alpha$ values, together with the 5--95\% percentiles. In the second column, the total training computational cost of each AE embedding is reported. The row shaded in green highlights the selected AE embedding, which is used for comparison with POD and PDMs throughout the remainder of this section.}
    \label{tab:autoencoder_error_statistics}
\end{table}

Moving from AE$_1$ to AE$_4$, the reconstruction error consistently decreases on both datasets, indicating that increasing the number of trainable parameters initially enhances the capability of the autoencoder to approximate the underlying data manifold. In particular, AE$_4$ achieves the lowest reconstruction error among all the tested architectures, with average errors of $0.64\%$ on the training set and $0.70\%$ on the test set, as also reflected by the compact distributions shown in Fig.~\ref{Manifold_baseline_recon_error_AUTOENCODERS} (fourth column in panels (a)--(b)). However, further increasing the model complexity (AE$_5$ and AE$_6$) does not yield additional improvements. On the contrary, a degradation in performance is observed, especially for AE$_6$, where both training and test errors increase significantly. This behavior indicates a degradation of both the learning and generalization performance of the autoencoder: increasing the number of trainable parameters beyond a certain point makes the optimization problem harder to solve effectively, leading to a deterioration of the learned manifold representation that is reflected in both training and test errors.

These results highlight the delicate trade-off between model expressiveness and trainability in autoencoder-based manifold learning. While sufficiently expressive architectures (i.e.\ with a relatively high number of learnable weights) are required to capture the intrinsic structure of the data, overly complex models may become increasingly difficult to train effectively, ultimately degrading performance. Based on this analysis, AE$_4$ was selected as the optimal autoencoder architecture, as it provides the best compromise between reconstruction accuracy and learning and generalization performance. Unless otherwise stated, all AE results reported in the remainder of the paper refer to this selected architecture. Notably, this configuration has approximately $2.8 \times 10^5$ trainable parameters and required nearly $80$ CPU-hours of training time (fourth row in Table~\ref{tab:autoencoder_error_statistics}), i.e.\ about three orders of magnitude higher than the computational cost associated with POD and PDMs.

Table~\ref{tab:cnn_ae} details the resulting architecture of the selected AE model. The encoder consists of six convolutional layers with a progressively decreasing number of filters, each followed by a max-pooling operation, resulting in a progressive reduction of the spatial resolution. As discussed in Appendix~\ref{app:NS_eq}, the input data to the autoencoder consist of the two-dimensional velocity field defined over a physical domain discretized on a $128 \times 128$ grid. Accordingly, the input tensor has size $(128,128,2)$, where the two channels correspond to the streamwise ($u$) and cross-stream ($v$) velocity components. The successive pooling operations reduce the spatial dimensions as $(128,128) \rightarrow (64,64) \rightarrow (32,32) \rightarrow (16,16) \rightarrow (8,8) \rightarrow (4,4) \rightarrow (2,2)$, while the number of channels evolves according to the number of filters in each convolutional layer. After the last convolutional layer, the tensor has size $(2,2,8)$, corresponding to $n^{(l-1)} = 2 \times 2 \times 8 = 32$ degrees of freedom. The encoder output tensor is flattened into a vector of dimension $n^{(l-1)} = 32$, which is then mapped onto the latent space of dimension $d=5$, i.e.\ $\mathbf{y}_m = \mathbf{z}^{(l)} \in \mathbb{R}^d$. The decoder mirrors this structure through a dense projection, reshaping, and successive upsampling and convolutional layers, ultimately reconstructing the original two-component field. In particular, the latent vector $\mathbf{y}_m \in \mathbb{R}^d$ with $d=5$ is first mapped back to a vector of dimension $n^{(l)} = 32$, which is then reshaped into a tensor of size $(2,2,8)$ and progressively upsampled to recover the original resolution. All convolutional layers use $3\times3$ kernels with same padding. Hidden layers employ the hyperbolic tangent activation function, while the final output layer is linear in order to reconstruct continuous-valued quantities.

\begin{table}
\centering
\caption{Best-performing CNN-based autoencoder architecture used in the present work.}
\label{tab:cnn_ae}
\begin{tabular}{l c l c}
\toprule
\multicolumn{2}{c}{\textbf{Encoder}} & \multicolumn{2}{c}{\textbf{Decoder}} \\
\cmidrule(r){1-2} \cmidrule(l){3-4}
\textbf{Layer} & \textbf{Data size} & \textbf{Layer} & \textbf{Data size} \\
\midrule
Input & $(128,128,2)$ & Latent vector & $(5)$ \\
Conv2D $(3\times3,128)$ & $(128,128,128)$ & Dense & $(32)$ \\
MaxPooling $(2\times2)$ & $(64,64,128)$ & Reshape & $(2,2,8)$ \\
Conv2D $(3\times3,64)$ & $(64,64,64)$ & UpSampling $(2\times2)$ & $(4,4,8)$ \\
MaxPooling $(2\times2)$ & $(32,32,64)$ & Conv2D $(3\times3,8)$ & $(4,4,8)$ \\
Conv2D $(3\times3,64)$ & $(32,32,64)$ & UpSampling $(2\times2)$ & $(8,8,8)$ \\
MaxPooling $(2\times2)$ & $(16,16,64)$ & Conv2D $(3\times3,16)$ & $(8,8,16)$ \\
Conv2D $(3\times3,32)$ & $(16,16,32)$ & UpSampling $(2\times2)$ & $(16,16,16)$ \\
MaxPooling $(2\times2)$ & $(8,8,32)$ & Conv2D $(3\times3,32)$ & $(16,16,32)$ \\
Conv2D $(3\times3,16)$ & $(8,8,16)$ & UpSampling $(2\times2)$ & $(32,32,32)$ \\
MaxPooling $(2\times2)$ & $(4,4,16)$ & Conv2D $(3\times3,64)$ & $(32,32,64)$ \\
Conv2D $(3\times3,8)$ & $(4,4,8)$ & UpSampling $(2\times2)$ & $(64,64,64)$ \\
MaxPooling $(2\times2)$ & $(2,2,8)$ & Conv2D $(3\times3,64)$ & $(64,64,64)$ \\
Flatten & $(32)$ & UpSampling $(2\times2)$ & $(128,128,64)$ \\
Dense (latent) & $(5)$ & Conv2D $(3\times3,128)$ & $(128,128,128)$ \\
 &  & Output Conv2D $(3\times3,2)$ & $(128,128,2)$ \\
\bottomrule
\end{tabular}
\end{table}

Having selected and characterized the best-performing AE architecture, we now compare it against POD and PDMs. Fig.~\ref{Manifold_baseline_recon_error} and Table~\ref{tab:rom_metrics_manifold} report the baseline reconstruction error $\varepsilon_m$ evaluated over $m = 1, \dots, 4500$ snapshots obtained for different $Re$ and $\alpha$ values sampled from the training and test datasets, for all three methods.

\begin{figure}
    \centering
    \includegraphics[scale=0.8]{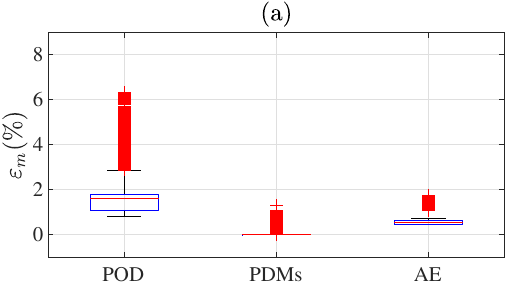}
    \hspace{0.2cm}
    \includegraphics[scale=0.8]{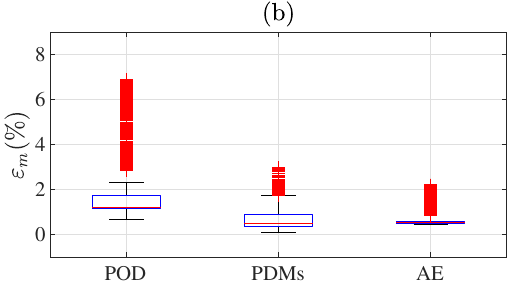}
    \caption{\label{Manifold_baseline_recon_error} Baseline reconstruction error $\varepsilon_m$ of the POD (first column), parsimonious DMs (second column), and AE (third column) embeddings, evaluated over $m = 1, \dots, 4500$ snapshots obtained for different $Re$ and $\alpha$ values sampled from the training (a) and test (b) datasets. In each box plot, the red line indicates the median, the blue box edges correspond to the 25$^{\text{th}}$ and 75$^{\text{th}}$ percentiles, the black whiskers extend to the most extreme values within 1.5 times the inter-quartile range, and the red markers denote values beyond this range. The AE results correspond to the best-performing architecture identified in Fig.~\ref{Manifold_baseline_recon_error_AUTOENCODERS} and Table~\ref{tab:autoencoder_error_statistics}.}
\end{figure}

\begin{table}
    \centering
    \setlength{\tabcolsep}{5pt}
    {\rowcolors{1}{white}{cyan!8}
        \begin{tabularx}{\textwidth}{lCCC}
            \toprule
            \textbf{Embedding}
            & \textbf{Training time (CPU-hours)}
            & $\boldsymbol{\varepsilon_m}$ (train dataset)
            & $\boldsymbol{\varepsilon_m}$ (test dataset) \\
            \midrule
            POD  & 0.06 & $1.77\,(0.83,5.06)\,\%$ & $1.69\,(0.69,5.40)\,\%$ \\
            PDMs & 0.06  & $0.03\,(0.00,0.19)\,\%$ & $0.70\,(0.22,1.61)\,\%$ \\
            AE   & 79.69 & $0.64\,(0.45,1.31)\,\%$ & $0.70\,(0.46,1.43)\,\%$ \\
            \bottomrule
        \end{tabularx}
    }
    \caption{Summary of the performance of the manifold-learning methods on the train (third column) and test (fourth column) datasets. The baseline reconstruction error $\varepsilon_m$ is shown in terms of the average over $m = 1, \dots, 4500$ snapshots obtained for different $Re$ and $\alpha$ values, together with the 5--95\% percentiles. In the second column, the total training computational cost for POD, PDMs, and the selected AE embedding is reported. The AE row corresponds to the best-performing architecture identified in Fig.~\ref{Manifold_baseline_recon_error_AUTOENCODERS} and Table~\ref{tab:autoencoder_error_statistics}.}
    \label{tab:rom_metrics_manifold}
\end{table}

A first quantitative comparison between POD-based and PDMs-based latent representations of the rotating-cylinder flow dynamics is provided in Fig.~\ref{Manifold_baseline_recon_error}, which reports the baseline reconstruction error $\varepsilon_m$ evaluated over $m = 1, \dots, 4500$ snapshots obtained for different $Re$ and $\alpha$ values sampled from the training (panel (a)) and test (panel (b)) datasets. This metric represents the error obtained after performing the restriction and lifting stages, without introducing any surrogate model for the latent dynamics. Therefore, it isolates the representational capability of the learned low-dimensional manifold, which constitutes a fundamental prerequisite for the construction of accurate predictive ROMs. The same error obtained using the selected autoencoder embedding is also reported (third column in Fig.~\ref{Manifold_baseline_recon_error}(a)--(b)).

Since DMs with parsimonious selection (PDMs) identify an intrinsic latent dimension equal to $d=5$, the comparison among the three manifold-learning methods is performed by fixing the number of modes to $d=5$ for all three methods. It can be clearly appreciated that PDMs consistently outperform POD on both datasets, achieving significantly lower reconstruction errors and reduced variability. On the training set (Fig.~\ref{Manifold_baseline_recon_error}(a)), PDMs yield nearly negligible errors, with a median close to zero and a very narrow inter-quartile range. This is quantitatively confirmed in Table~\ref{tab:rom_metrics_manifold}, where the average error is $0.03\%$, compared with $1.77\%$ for POD. On the test dataset (Fig.~\ref{Manifold_baseline_recon_error}(b)), PDMs maintain a clear advantage over POD, with an average error of $0.70\%$ versus $1.69\%$. Although the spread of the PDMs error increases compared with the training case, the distribution remains significantly more concentrated than that of POD, indicating improved generalization to unseen data.

The comparison between PDMs and AE embeddings is particularly insightful. Despite the substantially higher training cost (approximately $80$ CPU-hours, compared with $0.06$ for both POD and PDMs; see Table~\ref{tab:rom_metrics_manifold}), the AE does not provide a clear advantage over PDMs. On the training dataset, PDMs achieve even lower reconstruction errors than AE ($0.03\%$ vs.\ $0.64\%$ on average), highlighting the effectiveness of Diffusion Maps in capturing the intrinsic geometry of the data. On the test dataset, the two methods exhibit comparable performance (both around $0.70\%$ average error), suggesting that PDMs can match the generalization capability of neural-network-based approaches at negligible computational cost.

This further emphasizes the advantage of Diffusion Maps, which achieve comparable---or even superior---performance without the need for extensive hyperparameter tuning or high computational resources. Indeed, the training cost of AE should be regarded as a lower bound on the effective computational demand, since the sensitivity of the learned embedding to the random initialization of the network weights introduces an additional source of variability that must be accounted for in practice. A dedicated analysis over $N_{\text{seed}} = 100$ independent training runs, provided in Appendix~\ref{app:UQ_AE}, confirms that while AE is robust across initializations, the spread in reconstruction error is non-negligible in the periodic flow regimes, further motivating the use of geometry-preserving alternatives such as Diffusion Maps when computational efficiency and reproducibility are primary concerns.

\subsection{Forecasting in the low-dimensional space}
\label{subsec:res_forecast}
\begin{figure}
    \centering
    \includegraphics[scale=0.8]{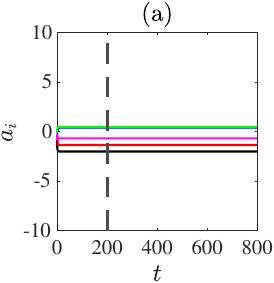}
    \includegraphics[scale=0.8]{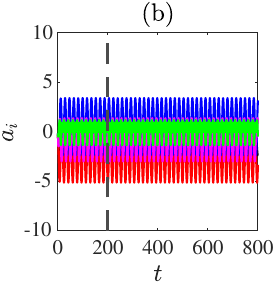}
    \includegraphics[scale=0.8]{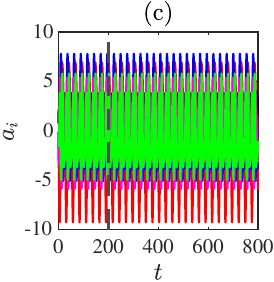}
    \includegraphics[scale=0.8]{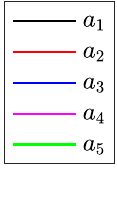}\\
    \vspace{0.2cm}
    \includegraphics[scale=0.8]{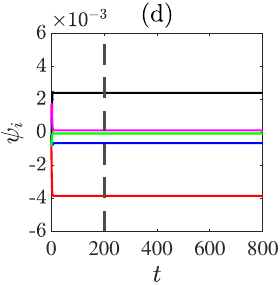}
    \includegraphics[scale=0.8]{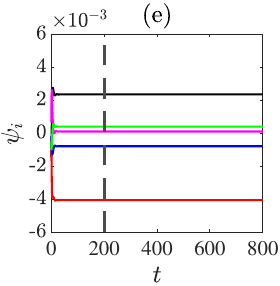}
    \includegraphics[scale=0.8]{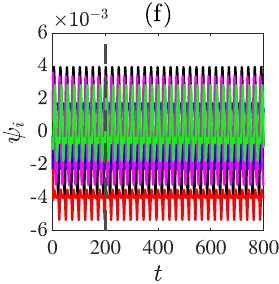}
    \includegraphics[scale=0.8]{Figures/DMs_embedding_legend}\\
    \vspace{0.2cm}
    \includegraphics[scale=0.8]{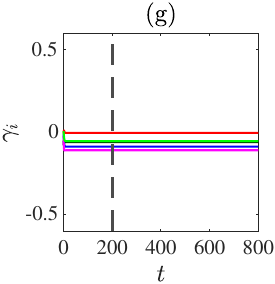}
    \includegraphics[scale=0.8]{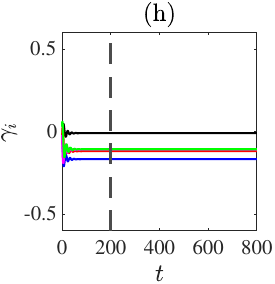}
    \includegraphics[scale=0.8]{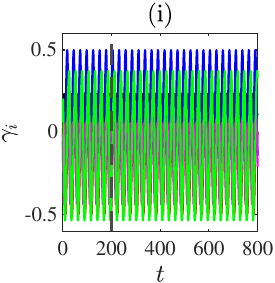}
    \includegraphics[scale=0.8]{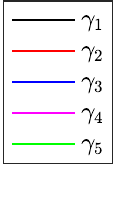}
    \vspace{0.2cm}
    \caption{\label{predicted_dynamics} Long-term prediction of the low-dimensional (latent) dynamics identified by the POD ((a)--(c)), PDMs ((d)--(f)), and AE ((g)--(i)) reduced-order models for $\alpha = 5.5$ at different values of the Reynolds number: $Re = 45$ (left panels), $Re = 55$ (central panels), and $Re = 75$ (right panels). The vertical black dashed line denotes the time horizon used for ROM training.}
\end{figure}

\begin{figure}
    \centering
    \includegraphics[scale=0.8]{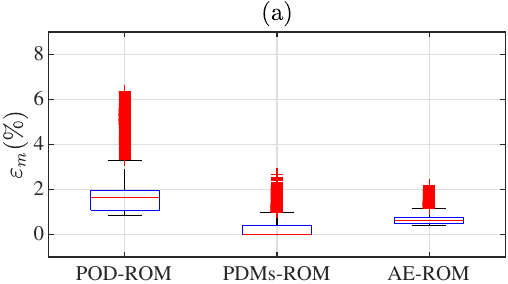}
    \hspace{0.2cm}
    \includegraphics[scale=0.8]{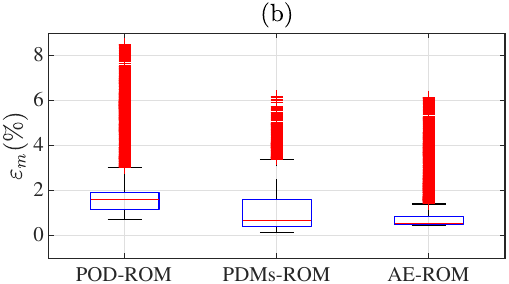}
    \caption{\label{ROM_prediction_recon_error} Prediction reconstruction error $\varepsilon_m$ (i.e.\ after solution of the pre-image problem on the forecast latent dynamics) of the POD (first column), PDMs (second column), and AE (third column) reduced-order models, evaluated over $m = 1, \dots, 4500$ snapshots obtained for different $Re$ and $\alpha$ values sampled from the training (a) and test (b) datasets. In each box plot, the red line indicates the median, the blue box edges correspond to the 25$^{\text{th}}$ and 75$^{\text{th}}$ percentiles, the black whiskers extend to the most extreme values within 1.5 times the inter-quartile range, and the red markers denote values beyond this range.}
\end{figure}

The identification of linear (via POD) and nonlinear (via PDMs and AE) embeddings of the rotating-cylinder flow dynamics enables the construction of surrogate models in the second stage of the procedure. 
The POD-, PDMs-, and AE-based ROMs are learned as discrete time-maps of the solution operator in the identified latent space via Gaussian Process Regression (see Eq.~\eqref{eq:GPR_y} in Appendix~\ref{app:GPR} and related discussion), and are numerically evaluated to assess their forecasting performance in the respective manifold coordinates; the results of this analysis are reported in Fig.~\ref{predicted_dynamics}. For all cases, the time integration is carried out up to a final time $t = 800$, which is four times larger than the training time horizon ($t = 200$, highlighted by the vertical black dashed line in all panels of Fig.~\ref{predicted_dynamics}). Moreover, both the initial conditions in the reduced-coordinate space and the parameters $Re$ and $\alpha$ are drawn from the test set, meaning that they were not observed during the training phase of the ROMs.

The PDMs-based ROM faithfully reproduces the transition from the steady regime, characterized by a fixed-point attractor in phase space at $Re \leq 55$ (Fig.~\ref{predicted_dynamics}(d)--(e)), to a limit-cycle regime at $Re = 75$ (Fig.~\ref{predicted_dynamics}(f)). In contrast, comparison with the POD-based ROMs (Fig.~\ref{predicted_dynamics}(a)--(c)) reveals an important limitation: while the steady (panel (a)) and limit-cycle (panel (c)) regimes are correctly reproduced, the transition between them is misidentified (panel (b)). In particular, the damped oscillations that should asymptotically converge to the fixed-point attractor (see Fig.~\ref{DMs_embedding_dynamics}(e) in Section~\ref{subsec:res_coord}) are incorrectly learned as a spurious limit cycle (see Fig.~\ref{predicted_dynamics}(b)). This highlights a key advantage of PDMs over POD-based ROMs, namely their ability to correctly identify all dynamical regimes, even in the presence of strongly nonlinear behaviors induced by the codimension-2 bifurcation occurring in this flow configuration (see the discussion in Section~\ref{sec:flow_config}). Fig.~\ref{predicted_dynamics}(g)--(i) shows the latent dynamics predicted by the AE-ROM for the same values of $Re$ and $\alpha$ discussed above. The AE latent coordinates reproduce the same dynamical regimes observed with PDMs. However, as discussed in Section~\ref{subsec:res_coord}, identifying an accurate AE embedding requires approximately three orders of magnitude more computational time than PDMs, as well as careful tuning of the hyperparameters of the convolutional neural networks used in the encoding and decoding stages to avoid overfitting (see Table~\ref{tab:autoencoder_error_statistics} and Fig.~\ref{Manifold_baseline_recon_error_AUTOENCODERS}).

\subsection{Reconstruction in the high-dimensional space}
\label{subsec:res_lift}

The third and final stage of the procedure consists of lifting (or decoding) the low-dimensional solutions forecast on the identified manifold back to the high-dimensional space, in order to evaluate the reconstruction error with respect to the ground-truth Navier--Stokes solutions. This stage requires solving the so-called pre-image problem in manifold learning. While, in the case of POD, the mapping between reduced- and full-order spaces is available in closed form, for nonlinear manifold-learning techniques the pre-image problem is generally ill-posed, and several approaches have been proposed in the literature (see the related discussion in the Introduction). In the present work, we employ a $K$-nearest-neighbors ($K$-NN) strategy with $K=4$ coupled with PDMs, whereas for the AE the decoding stage is naturally performed by the convolutional neural network used in the decoder.

The prediction reconstruction errors $\varepsilon_m$ (i.e.\ computed after solving the pre-image problem on the forecast latent dynamics) for the POD-, PDMs-, and AE-based ROMs are reported in Fig.~\ref{ROM_prediction_recon_error} and Table~\ref{tab:rom_metrics_prediction}. These metrics are evaluated over $m = 1, \dots, 4500$ snapshots obtained for different values of $Re$ and $\alpha$, sampled from both the training (Fig.~\ref{ROM_prediction_recon_error}(a)) and test (Fig.~\ref{ROM_prediction_recon_error}(b)) datasets. It can be seen that the same trend observed for the baseline reconstruction error (see Fig.~\ref{Manifold_baseline_recon_error} and Table~\ref{tab:rom_metrics_manifold} in Section~\ref{subsec:res_coord}) is preserved: PDMs-ROMs consistently outperform POD-ROMs on both training and test datasets, while achieving accuracy comparable to AE-ROMs. More specifically, on the training dataset, PDMs-ROMs yield a significantly lower average error ($0.27\%$) than POD-ROMs ($1.81\%$), with a much tighter error distribution, as indicated by the narrower percentile range (second column in Fig.~\ref{ROM_prediction_recon_error}(a)). Remarkably, AE-ROMs also improve over POD-ROMs ($0.72\%$), but are slightly less accurate than PDMs-ROMs in terms of mean error (third column in Fig.~\ref{ROM_prediction_recon_error}(a)). On the test dataset (Fig.~\ref{ROM_prediction_recon_error}(b)), the gap between PDMs-ROMs and AE-ROMs becomes marginal, with nearly identical average errors ($1.09\%$ and $1.06\%$, respectively), both substantially lower than that of POD-ROMs ($2.14\%$). It is also worth noting that PDMs-ROMs exhibit a lower minimum error (approaching zero on the training set), indicating their capability to achieve near-exact reconstructions under favorable conditions.

\begin{figure}
    \centering
    \includegraphics[scale=0.8]{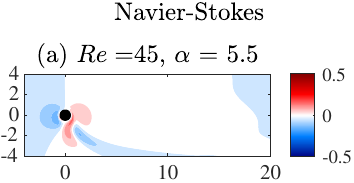}
    \includegraphics[scale=0.8]{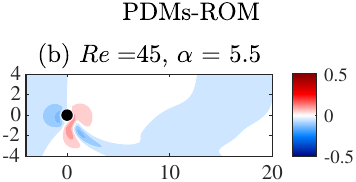}
    \includegraphics[scale=0.8]{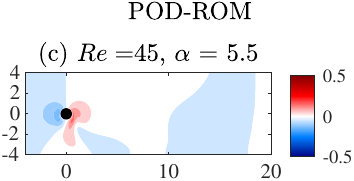}\\
    \vspace{0.2cm}
    \includegraphics[scale=0.8]{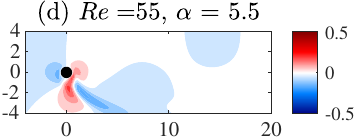}
    \includegraphics[scale=0.8]{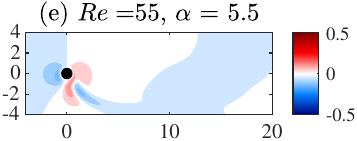}
    \includegraphics[scale=0.8]{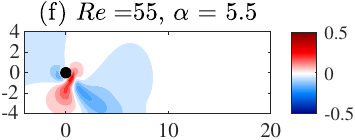}\\
    \vspace{0.2cm}
    \includegraphics[scale=0.8]{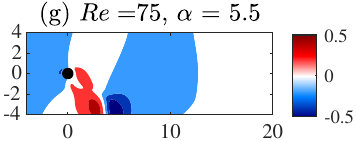}
    \includegraphics[scale=0.8]{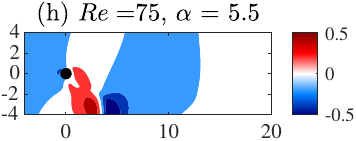}
    \includegraphics[scale=0.8]{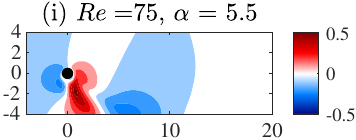}\\
    \caption{\label{predicted_contours} Comparison among the ground-truth Navier--Stokes $v$ velocity component field (left panels), the PDMs-ROM prediction (central panels), and the POD-ROM prediction (right panels) at the selected time instant $t = 200$ for different values of the governing parameters $(Re, \alpha)$ in the high-dimensional physical space $x$--$y$.}
\end{figure}

Finally, Fig.~\ref{predicted_contours} presents a comparison between the ground-truth Navier--Stokes $v$ velocity component field (left panels), the PDMs-ROM prediction (central panels), and the POD-ROM prediction (right panels) at the selected time instant $t = 200$, for different values of the governing parameters $(Re, \alpha)$. These cases span all the dynamical regimes of the considered flow configuration in the high-dimensional physical space. It can be clearly observed that the PDMs-ROM is able to faithfully reproduce the local variations of the steady Navier--Stokes velocity field induced by the cylinder rotation ($\alpha = 5.5$, $Re \leq 55$, Fig.~\ref{predicted_contours}(a)--(b) and (d)--(e)), as well as the asymmetric periodic vortex-shedding regime established at $Re = 75$ (Fig.~\ref{predicted_contours}(g)--(h)). In contrast, POD-based reconstructions are only able to qualitatively capture the velocity field, exhibiting significant errors in both the near- and far-field regions, in both the steady and periodic regimes (Fig.~\ref{predicted_contours}(c), (f), and (i)). We explicitly note that the AE-based high-dimensional velocity-field reconstructions have been found to be indistinguishable from those obtained with PDMs and therefore are not reported.

Overall, these results highlight PDMs as a computationally efficient, interpretable, and highly accurate nonlinear manifold-learning approach for reduced-order modelling of complex, multi-regime flow dynamics. In particular, PDMs enable the identification of the appropriate latent-space dimension, which is critical in configurations characterized by multiple dynamical regimes. This is especially relevant in the present case, where the underlying continuous-time NS system exhibits a codimension-2 Bogdanov--Takens bifurcation. Within this framework, PDMs not only significantly outperform POD-based ROMs in terms of reconstruction accuracy, but also achieve the same level as AE-based models in terms of predictive performance. However, unlike AEs, DMs provide a transparent and physically interpretable embedding, where the latent dimension emerges naturally from the data. In contrast, AEs rely on a largely black-box architecture, requiring the \emph{a priori} specification of the latent dimension, as well as training on the order of $10^5$ learnable parameters. This leads to a computational cost that, for the present flow configuration, is three orders of magnitude higher than that of PDMs, in addition to the need for careful hyperparameter tuning to avoid overfitting.

\begin{table}
    \centering
    \setlength{\tabcolsep}{5pt}
    {\rowcolors{1}{white}{cyan!8}
        \begin{tabularx}{\textwidth}{lCC}
            \toprule
            \textbf{Model}
            & $\boldsymbol{\varepsilon_m}$ (train dataset)
            & $\boldsymbol{\varepsilon_m}$ (test dataset) \\
            \midrule
            POD-ROM  & $1.81\,(0.83,5.11)\,\%$ & $2.14\,(0.71,5.99)\,\%$ \\
            PDMs-ROM & $0.27\,(0.00,1.31)\,\%$ & $1.09\,(0.25,3.29)\,\%$ \\
            AE-ROM   & $0.72\,(0.45,1.44)\,\%$ & $1.06\,(0.46,3.88)\,\%$ \\
            \bottomrule
        \end{tabularx}
    }
    \caption{Summary of the reduced-order-model performance on the train (first column) and test (second column) datasets. Each column reports the prediction reconstruction error $\varepsilon_m$ (i.e.\ after solution of the pre-image problem on the forecast latent dynamics) in terms of the average over $m = 1, \dots, 4500$ snapshots obtained for different $Re$ and $\alpha$ values, together with the 5--95\% percentiles.}
    \label{tab:rom_metrics_prediction}
\end{table}

\section{Conclusions}
\label{sec:conclusions}

This work revisits nonlinear manifold learning as an alternative to autoencoders for data-driven reduced-order modelling of fluid flows. We have shown that Parsimonious Diffusion Maps (PDMs) identify intrinsic latent coordinates directly from data, estimating their dimension rather than treating it as a hyperparameter, and that the resulting coordinates support accurate reconstruction and prediction of the full-order flow field.

The proposed framework combines PDMs for latent-space identification, Gaussian Process Regression (GPR) for learning the latent dynamics, and $K$-nearest-neighbors ($K$-NN) interpolation for approximating the pre-image, or lifting, map. This yields a fully non-intrusive ROM that provides uncertainty quantification through GPR and enables reconstruction of the high-dimensional flow fields from the predicted latent states. Importantly, the use of local interpolation for the pre-image problem offers a simple, transparent, and consistent reconstruction strategy, avoiding the need to train an additional nonlinear decoder.

The methodology has been assessed on the two-dimensional flow past a rotating cylinder, a benchmark configuration exhibiting multiple dynamical regimes governed by the Reynolds number $Re$ and the rotation rate $\alpha$. In the parameter space considered, the Navier--Stokes equations undergo a sequence of transitions associated with a codimension-2 Bogdanov--Takens bifurcation, involving the interaction of Andronov--Hopf, saddle-node, and homoclinic bifurcations. This provides a stringent benchmark for reduced-order modelling approaches.

The results demonstrate that PDMs-based ROMs significantly outperform classical linear approaches based on Proper Orthogonal Decomposition (POD), both in terms of reconstruction accuracy and predictive capability across different dynamical regimes. At the same time, PDMs achieve performance comparable to convolutional autoencoder (AE)-based ROMs at a small fraction of the computational cost, requiring only the selection of the kernel bandwidth and the parsimonious cutoff rather than a search over network architectures and training hyperparameters. Moreover, PDMs provide a transparent and physically interpretable embedding, in which the latent dimension emerges naturally from the data.

A key outcome of this work is that the PDMs-based latent representation provides a faithful, interpretable, and low-dimensional description of the different dynamical regimes of the flow, even in the presence of complex bifurcation structures. This property, together with the significantly lower computational cost of Diffusion Maps and Nystr\"om extension compared with autoencoder-based approaches, establishes Parsimonious Diffusion Maps as a powerful alternative to black-box autoencoder-based ROMs for complex, multi-regime fluid flows. In particular, the ability of PDMs to identify intrinsic coordinates and to clearly separate distinct dynamical regimes makes them well suited for future developments at the interface of data-driven modelling, numerical bifurcation and stability analysis, control, and real-time prediction, including their integration with classical numerical continuation techniques in the presence of global and codimension-2 bifurcations.

For the dataset sizes typical of the high-fidelity CFD simulations considered here, PDMs thus offer a favorable balance between accuracy, interpretability, and computational efficiency. A practical limitation, however, is their memory demand: the standard construction requires forming a kernel or affinity matrix whose size grows with the number of snapshots. Autoencoders, by contrast, can be trained in mini-batches and may therefore prove more scalable for substantially larger datasets than those considered here.

\appendix

\section{Numerical simulation of Navier--Stokes equations and dataset construction}
\label{app:NS_eq}

The Navier--Stokes equations~\eqref{eq:continuity}--\eqref{eq:momentum_v} are solved using a projection method, whereby a provisional velocity field is first computed via the second-order accurate Bell--Colella--Glaz (BCG) advection scheme, neglecting the pressure gradient, and subsequently projected onto the space of divergence-free fields by solving a pressure Poisson equation. Details on the numerical implementation in the open-source code \texttt{BASILISK} can be found in Popinet~\cite{Popinet2003}.

As described in Section~\ref{sec:flow_config}, the physical domain is discretized using a uniform structured grid with mesh spacing $\Delta x = \Delta y = 0.05$, corresponding to 20 grid cells per characteristic length $D$. This results in a total of $N_g = N_x \times N_y = 124{,}800$ grid points. Numerical simulations are performed over the parameter ranges $Re \in [40,80]$ and $\alpha \in [4.5,6.5]$, with sampling steps $\Delta Re = 5$ and $\Delta \alpha = 0.25$. The velocity field is sampled with a time step $\Delta t = 0.2$ over a total simulation time $T = 200$, yielding $1000$ temporal snapshots for each $(Re,\alpha)$ configuration.

At each time instant, the flow is described by the two-dimensional velocity field $\mathbf{u}(x,y,t) = (u(x,y,t), v(x,y,t))$, defined over the spatial domain. Upon spatial discretization, each snapshot is represented as a high-dimensional vector $\mathbf{x}_m \in \mathbb{R}^N$, obtained by stacking the two velocity components at all grid points. This notation is adopted consistently throughout the paper, in agreement with Section~\ref{sec:methodology}.

To construct the mapping $\Phi_M$ in Eq.~\eqref{eq:HD2LDmap} using PDMs, POD, and autoencoders, a subset of the data is selected from the parameter grid defined by $Re \in \{40,50,60,70,80\}$ and $\alpha \in \{4.5,5.0,5.5,6.0,6.5\}$, resulting in $25$ parameter combinations. For each $(Re,\alpha)$ pair, only $250$ snapshots are retained, randomly sampled from the last $500$ time steps of each simulation, in order to focus on statistically stationary regimes of the flow rather than transient dynamics.

Furthermore, to reduce the computational cost associated with the construction of the POD snapshot matrix (Eq.~\eqref{eq:eigprobQQt}), the Markov matrix for PDMs (Eq.~\eqref{eq:Markov}), and the training of the autoencoders, the velocity fields are spatially down-sampled via bilinear interpolation onto a coarser grid of size $\tilde{N}_x \times \tilde{N}_y = 16384$. This results in a reduced state dimension
$N = 2 \tilde{N}_x \tilde{N}_y = 32768$,
accounting for the two velocity components.

\begin{figure}
    \centering
    \includegraphics[scale=0.95]{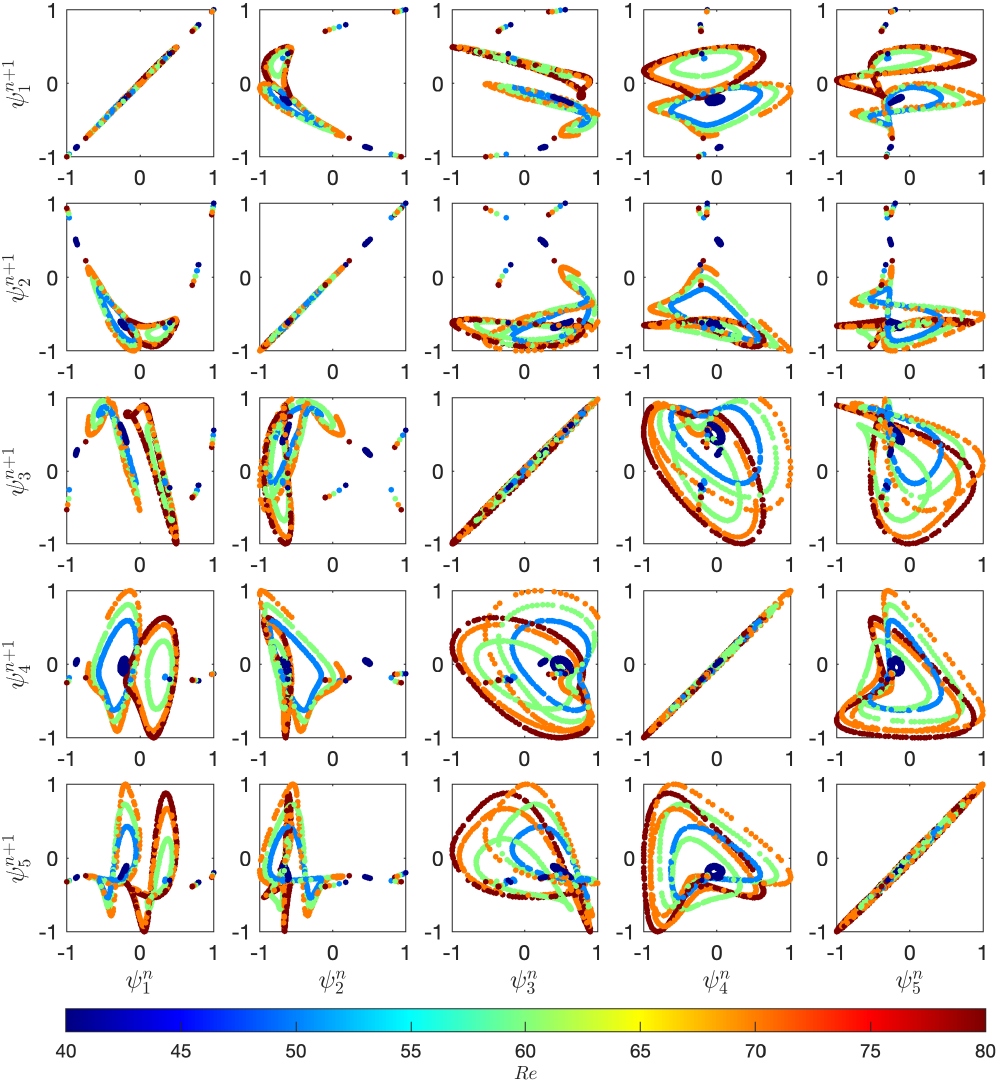}
\caption{\label{Training_data_varRE}Numerical dataset used to train the PDMs-based ROM of the latent dynamics of the rotating cylinder flow, expressed in PDMs coordinates $\psi_i$ ($i = 1, \dots, 5$) and colored by the Reynolds number $Re$. Here, $\psi_i^{n} \equiv \psi_i(t^n)$ and $\psi_i^{n+1} \equiv \psi_i(t^n + T_p)$, where $T_p = 0.2$ is the prediction time horizon. Note that the PDMs coordinates $\psi_i$ are normalized to the range $[-1, 1]$.}
\end{figure}

The resulting training dataset consists of $M = 6250$ snapshots $\mathbf{x}_m \in \mathbb{R}^N$ ($m=1, \dots, M$), which are used both to construct the restriction operator $\Phi_M$ (via PDMs, POD, and autoencoders) and to learn the latent dynamics through Gaussian Process Regression (see Appendix~\ref{app:GPR}).
The test set comprises all remaining parameter combinations in the original grid (i.e., the $56$ configurations not included in the training set) and is used exclusively to assess the generalization capability of the learned reduced-order models.

An overview of the PDMs training dataset is reported in Figs.~\ref{Training_data_varRE}--\ref{Training_data_varALPHA}, where the five Parsimonious Diffusion Maps coordinates $\mathbf{y} \equiv (\psi_1, \dots, \psi_5)$ selected for training are colored by the Reynolds number $Re$ and the rotation rate $\alpha$, respectively. For brevity, the corresponding datasets used to train the POD- and AE-based models are not shown. The data distribution exhibits a rich and intricate geometric structure. 
\begin{figure}
    \centering
    \includegraphics[scale=0.95]{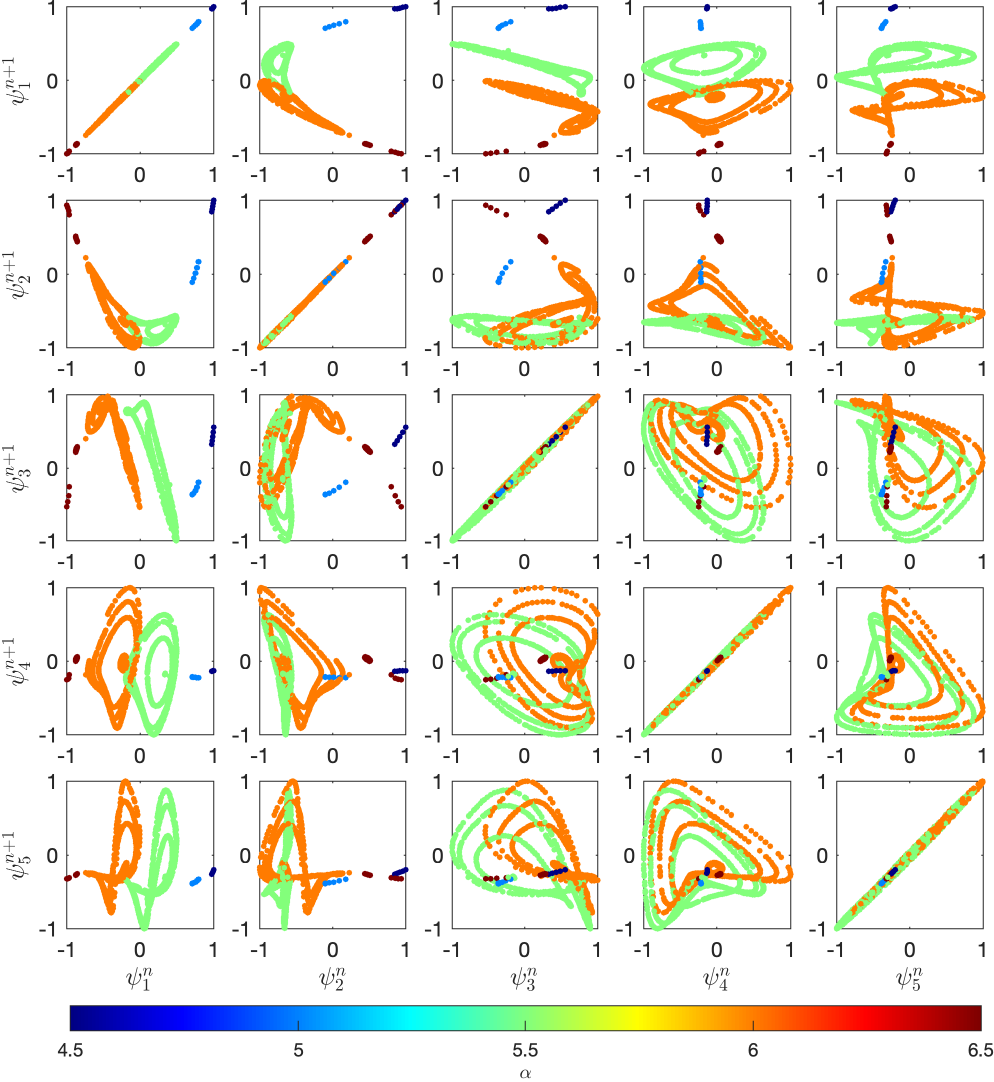}
\caption{\label{Training_data_varALPHA}Numerical dataset used to train the PDMs-based ROM of the latent dynamics of the rotating cylinder flow, expressed in PDMs coordinates $\psi_i$ ($i = 1, \dots, 5$) and colored by the rotation rate $\alpha$. Here, $\psi_i^{n} \equiv \psi_i(t^n)$ and $\psi_i^{n+1} \equiv \psi_i(t^n + T_p)$, where $T_p = 0.2$ is the prediction time horizon. Note that the PDMs coordinates $\psi_i$ are normalized to the range $[-1, 1]$.}
\end{figure}
In the $d = 5$ parsimonious DMs embedding, the data lie on curved manifolds with visible twisting and folding patterns, reflecting the coexistence and interaction of multiple time scales.
These complex structures are direct manifestations of the dynamical transitions associated with the underlying bifurcation scenario. The latent manifold is clearly not well approximated by a linear subspace; rather, it exhibits a non-trivial geometry and topology that require a nonlinear parametrization.
This observation further justifies the use of PDMs for this configuration: linear approaches such as POD would neither provide a geometrically consistent embedding nor reliably identify the intrinsic dimensionality required to capture the codimension-2 Bogdanov--Takens bifurcation and the associated complex dynamics. In contrast, the PDMs coordinates yield a smooth and dynamically meaningful embedding on which the latent evolution can be effectively learned.

For POD, the training snapshots are assembled into the snapshot matrix used in Eq.~\eqref{eq:eigprobQQt}. For PDMs, they are used to construct the Markov matrix in Eq.~\eqref{eq:Markov}. For autoencoders, they form the dataset used for training the encoder--decoder network. The same training data are also used to construct time-shifted pairs $\bigl(\mathbf{x}_m(t), \mathbf{x}_m(t+T_p)\bigr)$, which are mapped to the latent space and employed to train the Gaussian Process Regression models for the latent dynamics.

To evaluate the reconstruction error, we consider two subsets of $4500$ snapshots each, extracted from the training and test datasets, respectively. The training subset is constructed from the parameter combinations $Re \in \{40,50,70\}$ and $\alpha \in \{4.5,5.5,6.5\}$, retaining $500$ snapshots for each pair, while the test subset is constructed analogously from $Re \in \{45,55,75\}$ and $\alpha \in \{4.5,5.5,6.5\}$.

The reconstruction error associated with a given snapshot $\mathbf{x}_m$, drawn from either the training or test set, is defined as
\begin{equation}
\varepsilon_m
=
\frac{
\|\mathbf{x}_m - \hat{\mathbf{x}}_m\|_2
}{
\|\mathbf{x}_m\|_2
}
\times 100,
\label{eq:reconstruction_error}
\end{equation}
where $\hat{\mathbf{x}}_m = \Gamma_M(\Phi_M(\mathbf{x}_m))$ denotes the reconstructed velocity field. This metric provides a relative error measure, expressed as a percentage, quantifying the accuracy of the reduced-order representation.

\section{ROMs via Gaussian Process Regression}
\label{app:GPR}

Once a reduced-coordinate embedding of the flow dynamics has been obtained via the restriction operator $\Phi_M$ (using POD, PDMs, or AE), the latent dynamics are approximated by identifying the unknown solution operator introduced in Eq.~\eqref{eq:regression_basic_map} (Section~\ref{sec:methodology}) using Gaussian Process Regression (GPR). For consistency with the notation adopted in Section~\ref{sec:methodology}, we denote the latent variables by $\mathbf{y}(t) \in \mathbb{R}^d$. The same formulation applies regardless of the dimensionality-reduction technique employed (POD, PDMs, or AE). The predictors in Eq.~\eqref{eq:regression_basic_map} are taken to be the latent state itself, i.e.\
\begin{equation}
\mathbf{r}(t) = \mathbf{y}(t) \in \mathbb{R}^d,
\end{equation}
while the fixed parameters $\boldsymbol{\chi} \in \mathbb{R}^2$ include the two governing parameters of the flow configuration considered in Section~\ref{sec:flow_config}, namely the Reynolds number $Re$ and the rotation rate $\alpha$.

Accordingly, we construct ROMs with input vector
\begin{equation}
\mathbf{z}(t) = [\mathbf{y}(t),\, Re,\, \alpha]^\top \in \mathbb{R}^{d+2},
\end{equation}
and outputs given by the predicted latent state $\mathbf{y}(t+T_p)$ over a finite prediction horizon $T_p$.

The GPR surrogate models approximate each component of the latent mapping as
\begin{equation}
y_i(t+T_p) = g_i\bigl(\mathbf{y}(t), Re, \alpha; \boldsymbol{\theta}_i\bigr) + e_i,
\qquad e_i \sim \mathcal{N}(0,\sigma_i^2),
\quad i=1,\ldots,d,
\label{eq:reg_GPs_discrete}
\end{equation}
where $\mathbf{y} = [y_1,\ldots,y_d]^\top$ and $\sigma_i^2$ denotes the noise variance of the $i$-th component (not to be confused with the POD eigenvalues $\sigma_i$ of Section~\ref{subsec:POD}). Each component
\begin{equation}
g_i(\mathbf{z};\boldsymbol{\theta}_i) = g_i([\mathbf{y}, Re, \alpha]^\top;\boldsymbol{\theta}_i)
\end{equation}
is modelled as an independent Gaussian Process
\begin{equation}
g_i \sim \mathcal{GP}\bigl(0, k_i(\mathbf{z},\mathbf{z}' \mid \boldsymbol{\theta}_i)\bigr).
\end{equation}

For the implementation of GPR, we consider the latent representations $\mathbf{y}_m$ of the $M$ observations, together with the corresponding parameter values $(Re_m,\alpha_m)$, collected in the matrix
\begin{equation}
\mathbf{Z} = [\mathbf{z}_1, \dots, \mathbf{z}_M]^\top \in \mathbb{R}^{M \times (d+2)},
\qquad \mathbf{z}_m = [\mathbf{y}_m, Re_m, \alpha_m]^\top.
\end{equation}
Let $\mathbf{Y}_i = [y_{i,1}^+, \dots, y_{i,M}^+]^\top \in \mathbb{R}^M$ denote the target outputs corresponding to the $i$-th latent component across all training points, where
\begin{equation}
y_{i,m}^+ := y_i(t_m+T_p).
\end{equation}

The prior distribution for the $i$-th component is
\begin{equation}
P(\mathbf{g}_i \mid \mathbf{Z}) = \mathcal{N}\bigl(\mathbf{g}_i \mid \mathbf{0}, \mathbf{K}_i(\mathbf{Z}, \mathbf{Z} \mid \boldsymbol{\theta}_i)\bigr),
\end{equation}
where $\mathbf{g}_i = [g_i(\mathbf{z}_1), \dots, g_i(\mathbf{z}_M)]^\top$.

\begin{figure} 
\centering \includegraphics[scale=0.8]{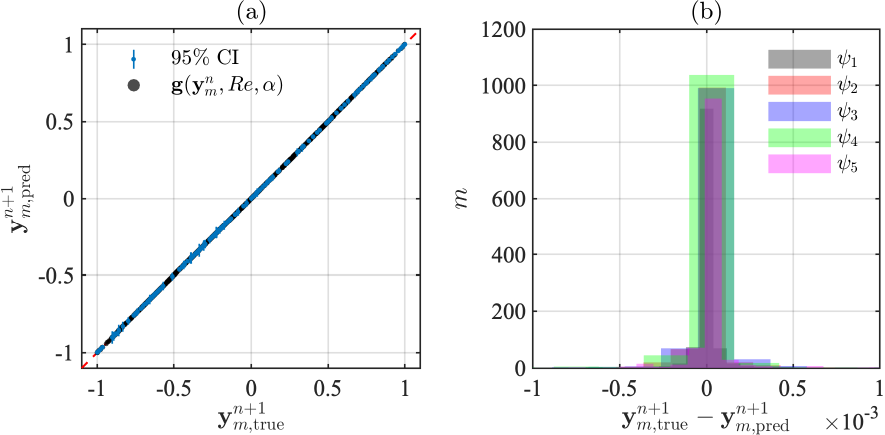} \caption{Uncertainty quantification of the PDMs-based ROM learned via Gaussian Process Regression (GPR). For input data $(\mathbf{y}^n_m, Re, \alpha)$ selected from the test set, the model predictions $\mathbf{y}^{n+1}_{m,\text{pred}}$ are reported in terms of their expected (mean) values (black dots) and corresponding 95\% confidence intervals (cyan bands), and compared against the true values $\mathbf{y}^{n+1}_{m,\text{true}}$ (panel (a), where the red dashed line denotes the bisector). Panel (b) shows the distribution of the prediction error $\mathbf{y}^{n+1}_{m,\text{true}} - \mathbf{y}^{n+1}_{m,\text{pred}}$ over all test samples, reported separately for each PDMs coordinate $\psi_i$ ($i = 1, \dots, 5$) normalized to the range $\psi_i \in [-1, 1]$.}
\label{fig:uncertainty} 
\end{figure}

Predictions at a new input $\mathbf{z}_* \in \mathbb{R}^{d+2}$ are obtained from the joint Gaussian distribution
\begin{equation}
\begin{bmatrix}
\mathbf{Y}_i \\
g_i(\mathbf{z}_*)
\end{bmatrix}
\sim \mathcal{N} \left( \mathbf{0},
\begin{bmatrix}
\mathbf{K}_i(\mathbf{Z}, \mathbf{Z}) + \sigma_i^2 \mathbf{I}_M & \mathbf{k}_i(\mathbf{Z}, \mathbf{z}_*) \\
\mathbf{k}_i(\mathbf{z}_*, \mathbf{Z}) & k_i(\mathbf{z}_*, \mathbf{z}_*)
\end{bmatrix}
\right).
\end{equation}

The posterior predictive distribution yields
\begin{align}
\mu_{i,*} &= \mathbf{k}_i(\mathbf{z}_*, \mathbf{Z}) 
\left[ \mathbf{K}_i(\mathbf{Z}, \mathbf{Z}) + \sigma_i^2 \mathbf{I}_M \right]^{-1} \mathbf{Y}_i, \\
\sigma_{i,*}^2 &= k_i(\mathbf{z}_*, \mathbf{z}_*) - \mathbf{k}_i(\mathbf{z}_*, \mathbf{Z}) 
\left[ \mathbf{K}_i(\mathbf{Z}, \mathbf{Z}) + \sigma_i^2 \mathbf{I}_M \right]^{-1} 
\mathbf{k}_i(\mathbf{Z}, \mathbf{z}_*).
\end{align}

The hyperparameters $\boldsymbol{\theta}_i$ and noise variance $\sigma_i^2$ are estimated by minimizing the negative log marginal likelihood (NLML):
\begin{equation}
-\log P(\mathbf{Y}_i \mid \mathbf{Z}, \boldsymbol{\theta}_i) =
\frac{1}{2} \mathbf{Y}_i^\top \mathbf{\Sigma}_i^{-1} \mathbf{Y}_i
+ \frac{1}{2} \log |\mathbf{\Sigma}_i|
+ \frac{M}{2} \log 2\pi,
\end{equation}
where $\mathbf{\Sigma}_i = \mathbf{K}_i(\mathbf{Z}, \mathbf{Z}) + \sigma_i^2 \mathbf{I}_M$.

We employ a radial basis function kernel with automatic relevance determination (ARD):
\begin{equation}
k_i(\mathbf{z}_m, \mathbf{z}_l) =
(\theta_{i,1})^2 
\exp\left(
-\sum_{j=1}^{d+2} 
\frac{(z_{m,j} - z_{l,j})^2}{2(\theta_{i,j+1})^2}
\right),
\end{equation}
where the input dimensions correspond to the latent coordinates and the parameters $(Re,\alpha)$.

Finally, the GPR model is trained using time-shifted pairs
\begin{equation}
\bigl(\mathbf{y}_m(t),\,\mathbf{y}_m(t+T_p)\bigr),
\end{equation}
and the resulting surrogate for the latent dynamics is given by the discrete-time map
\begin{equation}
\left\{
\begin{aligned}
y_1(t+T_p) &= g_1(y_1(t), \dots, y_d(t), Re, \alpha), \\
&\vdots \\
y_d(t+T_p) &= g_d(y_1(t), \dots, y_d(t), Re, \alpha).
\end{aligned}
\right.
\label{eq:GPR_y}
\end{equation}

For the numerical evaluation of the surrogate models used to obtain the results reported in Section~\ref{sec:results}, only the expected (mean) value of the predicted solution operators in Eq.~\eqref{eq:GPR_y} is considered. The predictive accuracy and associated uncertainty levels are illustrated in Fig.~\ref{fig:uncertainty}. For input data $(\mathbf{y}^n_m, Re, \alpha)$ selected from the test set, the GPR model provides both the expected value of the prediction $\mathbf{y}^{n+1}_{m,\text{pred}}$ (black dots in Fig.~\ref{fig:uncertainty}(a)) and the associated uncertainty, quantified through the standard deviation and reported as 95\% confidence intervals (cyan bands). These predictions are compared against the corresponding true values $\mathbf{y}^{n+1}_{m,\text{true}}$ in panel (a), while the distribution of the prediction error, $\mathbf{y}^{n+1}_{m,\text{true}} - \mathbf{y}^{n+1}_{m,\text{pred}}$, over all test samples is shown in panel (b). The maximum standard deviation over the test set is $2.56\%$ of the corresponding mean value. Such relatively small uncertainty levels demonstrate that the learned surrogate models of the rotating cylinder flow provide an accurate and robust approximation of the latent Navier--Stokes dynamics across the explored parameter ranges.

\section{Sensitivity of the AE embedding to network weight initialization}
\label{app:UQ_AE}

\begin{figure}[t]
    \centering
    \includegraphics[scale=0.8]{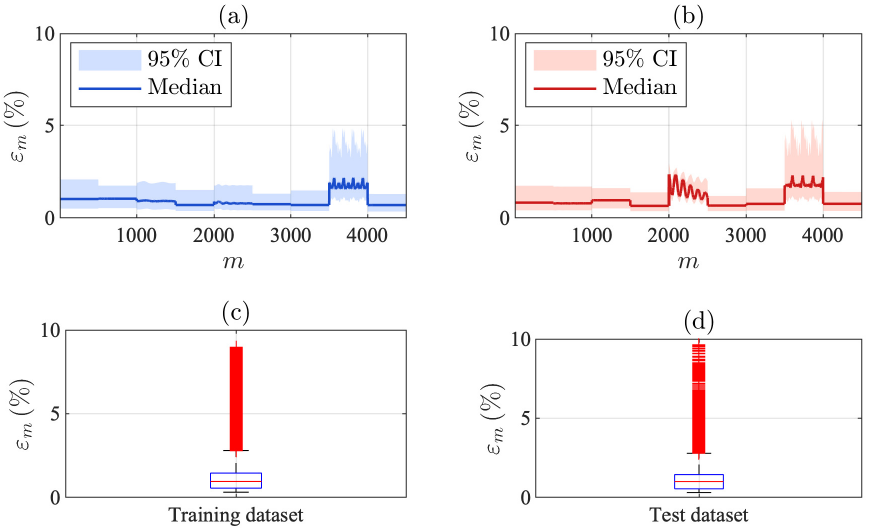}
    \caption{Sensitivity of the AE embedding to network weight initialization
    over $N_{\text{seed}}=100$ independent random initializations.
    Panels~(a)-(b): median (solid line) and 5--95\,\% confidence interval (shaded band)
    of the baseline reconstruction error $\varepsilon_m$ (\%) computed
    over the $m = 1,\dots,4500$ snapshots of the training and test datasets,
    respectively.
    Panels~(c)-(d): box-plot visualization of the same distributions shown in
    panels~(a)-(b), aggregating all snapshots and all $N_{\text{seed}}$ models into a single
    statistical summary for the training and test sets, respectively.
    Whiskers extend to the 5th and 95th percentiles; the central mark denotes
    the median. Note that, for visualization purposes, the snapshots in panels~(a)–(b) are ordered according to increasing \(Re\)–\(\alpha\) parameter combinations.
}
    \label{fig:uncertainty_AE}
\end{figure}

Convolutional autoencoder architectures embed high-dimensional flow fields into a
low-dimensional latent space through a sequence of nonlinear operations whose
learned parameters depend critically on the random initialization of the network
weights. As a result, different training runs starting from different weight
initializations may converge to distinct local minima of the loss landscape,
potentially yielding embeddings with non-negligible variability in reconstruction
accuracy. Assessing the sensitivity of the embedding to this source of randomness
is essential to establish the robustness of the coordinate system on which the
AE-based reduced-order models are built.

To this end, we select the best-performing autoencoder architecture identified in
Section~\ref{subsec:res_coord} --- namely AE$_4$ (see
Fig.~\ref{Manifold_baseline_recon_error_AUTOENCODERS} and
Table~\ref{tab:autoencoder_error_statistics}) --- and repeat the training procedure
for $N_{\text{seed}} = 100$ independent random initializations of all encoding and
decoding network weights, keeping all other hyperparameters (architecture, optimizer,
learning rate, batch size, early-stopping patience) identical across runs. In each run, all convolutional and dense layer weights are initialized by drawing
from the Glorot uniform distribution~\cite{glorot2010understanding}, with the
random seed set to $s \in \{0, 1, \dots, 99\}$ prior to model construction;
all other sources of randomness (data loading and train/test split) are
fixed across runs to isolate the effect of weight initialization alone.
This ensemble of $N_{\text{seed}}$ trained models allows us to evaluate the
sensitivity of the baseline reconstruction error (see
Eq.~\eqref{eq:reconstruction_error} in Appendix~\ref{app:NS_eq}) over the
$m = 1,\dots,4500$ snapshots of both the training dataset
($Re \in \{40, 50, 70\}$, $\alpha \in \{4.5, 5.5, 6.5\}$) and the test
dataset ($Re \in \{45, 55, 75\}$, $\alpha \in \{4.5, 5.5, 6.5\}$), retaining
only the last 500 snapshots per $(Re, \alpha)$ combination to focus on the
statistically converged regime.

Results are reported in Fig.~\ref{fig:uncertainty_AE}.
Panels~(a)--(b) display, for each snapshot index $m$, the median and 5--95\,\%
confidence interval of $\varepsilon_m$ across the $N_{\text{seed}}$ models for
the training and test datasets, respectively. Note that, for visualization purposes, the snapshots are ordered according to increasing \(Re\)–-\(\alpha\) parameter combinations. Consequently, lower values of \(m\) correspond to steady-state regimes, whereas higher values correspond to limit-cycle regimes.
The sensitivity to weight initialization is low throughout the steady-state regimes
($\varepsilon_m \lesssim 2\,\%$), confirming that the AE embedding is robust when
the flow settles on a fixed-point attractor.
The variability slightly increases in the limit-cycle (periodic) regimes,
reaching a maximum of approximately $\varepsilon_m \approx 5\,\%$ in the snapshot
intervals $m \in [2000, 2500]$ and $m \in [3500, 4000]$, which correspond to
parameter combinations where the cylinder wake exhibits sustained vortex shedding.
This suggests that, while the median reconstruction quality is consistently good,
the loss landscape becomes more complex in the periodic regimes, allowing different
initializations to settle into solutions with slightly different reconstruction
accuracy.

Crucially, the sensitivity is comparable between the training and test sets ---
both in the width and magnitude of the confidence bands --- indicating that the
observed variability is an intrinsic property of the optimization problem rather
than a symptom of overfitting.
Finally, panels~(c)--(d) provide a compact box-plot summary of the full
error distributions, aggregating all snapshots and all $N_{\text{seed}} = 100$
seeds into a single visualization for each dataset. The interquartile range and
whiskers confirm the picture emerging from panels~(a)--(b): the AE embedding
is robust across random initializations, with the bulk of reconstruction errors
well below $\varepsilon_m = 3\,\%$ for both training and test data.

\bibliographystyle{unsrt}  
\bibliography{mybibfile}
	
	\end{document}